\documentclass[arxiv,reqno,twoside,a4paper,12pt]{amsart}

\usepackage[margin=3cm]{geometry}

\linespread{1.05}
\usepackage[scaled]{helvet}
\usepackage{courier}
\usepackage[mathbf]{euler}
\usepackage{caption}

\usepackage{amssymb, amsfonts, amsbsy, amsmath, amsthm, mathrsfs}
\usepackage[english]{babel}
\usepackage{listings}

\usepackage[german=quotes]{csquotes}

% Graphik und kommutative Diagramme
\usepackage{graphicx}
\usepackage{tikz}
\usepackage{float}
\usepackage{enumerate}

% Etwas Farbe, man kann sogar per rgb Wert definieren

\definecolor{qqwuqq}{rgb}{0,0,0}

\usepackage{color}

%\definecolor{darkblue}{rgb}{0,0,.5}

% Das macht, dass man im pdf auf die Verweise klicken kann
\usepackage[colorlinks,linkcolor=blue, citecolor=blue,urlcolor=blue]{hyperref}

% Fuer Randbemerkungen
\usepackage{marginnote}
\usepackage[color]{changebar}
\changebarsep0.2cm
\changebarwidth0.03cm
\cbcolor{red}

% Ringe und Körper
\newcommand{\N}{{\mathbb N}}

\newcommand{\R}{{\mathbb R}}
\newcommand{\C}{{\mathbb C}}

% Potenzmenge

% Abbildungen
\newcommand{\Id}{\operatorname{Id}}

% Theorem Definitionen mit durchlaufender Numerierung
\theoremstyle{definition}
\newtheorem{definition}{Definition}[section]

\theoremstyle{plain}
\newtheorem{remark}[definition]{Remark}
\newtheorem{explanation}[definition]{Possible explanation}
\newtheorem{proposition}[definition]{Proposition}
\newtheorem{lemma}[definition]{Lemma}
\newtheorem{theorem}[definition]{Theorem}

% Meins

\DeclareMathOperator{\Det}{det'}

\DeclareMathOperator{\tr}{Tr}

%Landau O
\DeclareMathOperator{\oo}{\mathcal{O}}

%Arccot

\setcounter{tocdepth}{1}
\numberwithin{equation}{section}

\begin{document}

\date{\today}
\title[Extremals of determinants for Laplacians on discrete surfaces]
{Extremals of determinants for Laplacians \\ on discrete surfaces}

\author{Paul Hafemann}
\address{University of Oldenburg, Germany} 
\email{paul.hafemann@uni-oldenburg.de} 

\author{Boris Vertman}
\address{University of Oldenburg, Germany} 
\email{boris.vertman@uni-oldenburg.de} 

\begin{abstract}
In this note we pursue a discrete analogue of a celebrated theorem by Osgood, 
Phillips and Sarnak, which states that in a fixed conformal class of Riemannian metrics 
of fixed volume on a closed Riemann surface, the zeta-determinant 
of the corresponding Laplace-Beltrami operator is maximized on a metric of 
constant Gaussian curvature. We study the corresponding question for the discrete cotan Laplace operator
on triangulated surfaces. We show for some types of triangulations that the determinant (excluding zero eigenvalues) of the
discrete cotan-Laplacian  is stationary at discrete metrics of constant discrete Gaussian curvature. 
We present some numerical calculations, which suggest that these stationary points are in fact minima, 
strangely opposite to the smooth situation. We suggest an explanation of this discrepancy.
\end{abstract}

\maketitle

\tableofcontents

%%%%%%%%%%%%%%%%%%%%%%%%%%%%%%%
\section{Introduction and statement of the main results}
%%%%%%%%%%%%%%%%%%%%%%%%%%%%%%%

In their celebrated theorem, Osgood, Phillips and Sarnak \cite{ops} have employed
Ricci flow to prove that in a fixed conformal class and for fixed volume, the zeta regularized determinant of the 
Laplace-Beltrami operator on a closed Riemann surface is maximal when the metric 
is of constant Gaussian curvature. Their result can be phrased as follows.

\begin{theorem}[Osgood, Philips and Sarnak] \label{OPS}
The zeta-determinant of the Laplace-Beltrami operator on a compact Riemann surface 
of fixed volume attains its local maxima at metrics of constant Gaussian curvature. 
\end{theorem}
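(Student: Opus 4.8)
The plan is to forgo the Ricci flow entirely and instead base the argument on the Polyakov--Alvarez variational formula together with a second--variation analysis. Fix a background metric $g_0$ in the given conformal class and write every competing metric as $g = e^{2\phi}g_0$ for a smooth function $\phi$; the fixed--volume constraint becomes $\int_M e^{2\phi}\dif A_0 = A_{g_0}$, where $\dif A_0$ is the area element of $g_0$. The Polyakov formula expresses the quantity to be maximized as
\[
\log \Det\Delta_g = \log\Det\Delta_{g_0} + \log\frac{A_g}{A_{g_0}} - \frac{1}{12\pi}\int_M\bigl(|\nabla_0\phi|^2 + 2K_0\phi\bigr)\dif A_0,
\]
so that under the volume constraint maximizing $\log\Det\Delta_g$ is equivalent to \emph{minimizing} the functional $J[\phi] = \int_M(|\nabla_0\phi|^2 + 2K_0\phi)\dif A_0$ subject to $\int_M e^{2\phi}\dif A_0$ being fixed.

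First I would compute the first variation with a Lagrange multiplier $\mu$ for the volume constraint. Integration by parts gives the Euler--Lagrange equation $-\Delta_0\phi + K_0 = \mu\,e^{2\phi}$, which is exactly the Liouville equation for the Gaussian curvature of $g = e^{2\phi}g_0$; comparing with the transformation law $K_g\,e^{2\phi} = K_0 - \Delta_0\phi$ identifies $K_g \equiv \mu$. Hence the stationary metrics are precisely those of constant Gaussian curvature, establishing that constant curvature is necessary for criticality.

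Next I would pass to the second variation to show these critical points are genuine local maxima. Differentiating the constrained functional twice at a critical point and rewriting the result in terms of the metric $g$ itself (using the conformal invariance of the Dirichlet energy in dimension two) yields
\[
\delta^2\bigl(J - \mu\,\mathrm{vol}\bigr)[\psi] = 2\int_M|\nabla_g\psi|^2\dif A_g - 4K_g\int_M\psi^2\dif A_g,
\]
to be evaluated on the tangent space of the constraint, namely those $\psi$ with $\int_M\psi\,\dif A_g = 0$. On this space the Poincar\'e inequality gives $\int_M|\nabla_g\psi|^2\dif A_g \ge \lambda_1(g)\int_M\psi^2\dif A_g$, so the second variation is positive provided $\lambda_1(g) > 2K_g$. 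By Gauss--Bonnet $K_g = 2\pi\chi(M)/A_g$, hence when $\chi(M)\le 0$ one has $2K_g\le 0\le\lambda_1(g)$ and the inequality is strict (for the torus the only remaining degeneracy is killed by the zero--mean condition), giving local --- in fact global --- maximality at once.

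I expect the sphere ($\chi(M)=2$) to be the main obstacle: there the round metric satisfies $\lambda_1(g)=2K_g$ exactly, so the crude Poincar\'e bound only yields $\delta^2\ge 0$ and the naive argument is inconclusive. This borderline degeneracy reflects the non--compact conformal group of $S^2$, along whose orbits the determinant is constant. To close this case I would replace the second--variation estimate by the sharp Onofri (Moser--Trudinger) inequality on $S^2$, whose extremals are exactly the round metric and its images under the M\"obius group; this shows that $J$ is globally minimized there, and hence that $\Det\Delta_g$ is maximized, modulo the conformal symmetry. Handling this conformal degeneracy --- rather than the first--variation computation or the non-positively curved cases --- is the genuinely delicate part of the argument.
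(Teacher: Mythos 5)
Your sketch is essentially correct, but note first that the paper does not prove this theorem at all: it is quoted from \cite{ops} as motivation for the discrete results, and the introduction summarizes the original proof as \enquote{employing Ricci flow}. What you have written in fact reconstructs the argument Osgood, Phillips and Sarnak actually gave: the Polyakov conformal anomaly formula (your displayed identity is their formula, with the correct $\log(A_g/A_{g_0})$ volume term) reduces the problem to minimizing $J[\phi]=\int_M(|\nabla_0\phi|^2+2K_0\phi)\dif A_0$ at fixed conformal volume; the Euler--Lagrange equation is the Liouville equation, identifying critical points with constant-curvature metrics; the cases $\chi(M)\le 0$ follow from elementary estimates; and the sphere is closed by Onofri's inequality, whose equality case (round metrics modulo M\"obius transformations) exactly matches the degeneracy $\lambda_1(g)=2K_g$ that you correctly isolated, including its origin in the noncompact conformal group. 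Your second-variation computation checks out: the Hessian of $J-\mu\,\mathrm{vol}$ on the constraint tangent space $\{\int_M\psi\dif A_g=0\}$ is $2\int_M|\nabla_g\psi|^2\dif A_g-4K_g\int_M\psi^2\dif A_g$, and positivity under $\lambda_1(g)>2K_g$ is right. So relative to the paper your proposal supplies a proof where none is given, and it is arguably more faithful to \cite{ops} than the Ricci-flow attribution in the paper's introduction (the flow approach, in the spirit of Hamilton's surface Ricci flow along which the determinant increases, is an alternative dynamic route to the same statement).

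One caveat: the claim that the Poincar\'e bound gives global maximality \enquote{at once} for $\chi(M)\le 0$ overstates what a second-variation argument can deliver; positivity of the Hessian (which is $H^1$-coercive on the constraint tangent space when $\lambda_1(g)>2K_g$, as holds for $\chi(M)<0$ outright and on mean-zero functions for the torus) yields \emph{local} maximality, which is all the theorem asserts. If you want the global statement for $\chi(M)\le 0$, the clean route is to take the background $g_0$ of constant curvature $K_0\le 0$ and apply Jensen's inequality: the constraint $\int_M e^{2\phi}\dif A_0=A_{g_0}$ forces the mean of $\phi$ to be nonpositive, hence $2K_0\int_M\phi\dif A_0\ge 0$ and $J[\phi]\ge 0=J[0]$, with equality only at $\phi\equiv 0$. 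You should also set the variational problem in $H^1$, where the volume constraint is well defined by Moser--Trudinger. These are finishing touches rather than gaps in the idea.
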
 

There are discrete counterparts to all of the objects appearing in this statement: the Riemann surface 
becomes a triangulation endowed with a discrete metric for which discrete Laplace operators can be defined. 
There are well-studied concepts like discrete Gaussian curvature on a triangulation. 
Thus it makes sense to ask if there is a discrete counterpart to the Osgood-Phillips-Sarnak theorem.
Our main observation is that two particular classes of triangulations admit an analogy to the smooth case, namely
triangulations defined by complete and by symmetric graphs. 

%%%%%%%%%%%%%%%%%%%%%%%%%%%%%%%
\subsection{Extremals of determinants for complete graphs}
%%%%%%%%%%%%%%%%%%%%%%%%%%%%%%%

Our first main result studies determinants on complete graphs. 
The full statement is presented below in Theorem \ref{sphth}
and reads informally as follows.

\begin{theorem}\label{main1}
Let $M = (V, E, T)$ be a triangulation of a smooth closed Riemannian surface, consisting of vertices $V$, edges $E$ and
triangle faces $T$. Consider a discrete metric $l: E \to (o,\infty)$ and impose the following conditions:
\begin{enumerate}
\item the graph is complete, i.e. all pairs of vertices are connected by an edge.
\item $M$ has no boundary, i.e. each edge is bounded by exactly two triangles.
\item the discrete metric $l$ induces angles $\pi/3$ in each triangle
\footnote{In particular, the discrete Gaussian curvature of $(M,l)$ is constant at all vertices.}.
\end{enumerate}
Then the determinant (excluding zero eigenvalues) 
of the cotan-Laplace operator $\Delta = \Delta_\text{cot}$ on $(M,l)$ is stationary 
with respect to variational changes of $l$. 
\end{theorem}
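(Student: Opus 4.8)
The plan is to express the modified determinant through the weighted matrix-tree theorem and then reduce the variational statement to a single transparent identity about angle sums in triangles.

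First I would recall that the cotan-Laplacian $\Delta$ is the weighted graph Laplacian with edge weights $w_e = \tfrac12(\cot\alpha_e + \cot\beta_e)$, where $\alpha_e,\beta_e$ are the two angles opposite the edge $e$ in the two triangles adjacent to it; condition (2) guarantees exactly two such angles per edge. At the equilateral configuration every angle equals $\pi/3$, so each weight equals $w_0 = \cot(\pi/3) = 1/\sqrt3 > 0$, and since the complete graph is connected, $\Delta$ is positive semidefinite with one-dimensional kernel spanned by the constants. This one-dimensionality persists under small variations of $l$, so $\Det \Delta = \prod_{\lambda_i \neq 0}\lambda_i$ is a smooth function of $l$ nearby. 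By the weighted matrix-tree theorem, $\Det \Delta = |V|\cdot\tau$, where $\tau = \sum_{\mathcal T}\prod_{e\in\mathcal T} w_e$ and the sum runs over all spanning trees $\mathcal T$ of the $1$-skeleton. Hence $\delta\log\Det \Delta = \delta\log\tau$, and it suffices to show that $\delta\log\tau$ vanishes at the equilateral metric.

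Next I would differentiate $\tau$, viewed as a polynomial in the weights, obtaining $\partial\tau/\partial w_e = \sum_{\mathcal T\ni e}\prod_{e'\in\mathcal T,\,e'\neq e} w_{e'}$. At the equilateral configuration all weights coincide, so this equals $w_0^{|V|-2}\,N_e$, with $N_e$ the number of spanning trees through $e$. This is the step that uses completeness, condition (1): since $K_{|V|}$ is edge-transitive, its automorphism group permutes spanning trees, so $N_e = N$ is the same for every edge. Writing $T$ for the total number of spanning trees, so that $\tau = w_0^{|V|-1}T$, we get the clean reduction
\[
\delta\log\tau = \frac{1}{\tau}\sum_{e\in E}\frac{\partial\tau}{\partial w_e}\,\delta w_e = \frac{w_0^{|V|-2}N}{w_0^{|V|-1}T}\sum_{e\in E}\delta w_e = \frac{N}{w_0\,T}\sum_{e\in E}\delta w_e .
\]
Thus stationarity is now equivalent to the vanishing of $\sum_{e}\delta w_e$. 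Summing the weights and using the bijection between (edge, adjacent triangle) pairs and triangle angles afforded by condition (2), one has $\sum_{e\in E} w_e = \tfrac12\sum_\theta\cot\theta$, the sum running over all angles of all triangles. Differentiating gives $\delta\sum_e w_e = -\tfrac12\sum_\theta\csc^2\theta\,\delta\theta$; at the equilateral metric $\csc^2\theta = 4/3$ is constant, which is where condition (3) enters a second time, so $\delta\sum_e w_e = -\tfrac23\sum_\theta\delta\theta$. The decisive observation is that each Euclidean triangle has angle sum exactly $\pi$ for every admissible $l$, so $\sum_{\theta\in t}\delta\theta = \delta\pi = 0$ for each face $t$; summing over all faces yields $\sum_\theta\delta\theta = 0$ and hence $\delta\log\Det \Delta = 0$.

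I expect the genuine obstacle to lie in the first step rather than the last: finding a manageable expression for the first variation of a pseudo-determinant whose kernel has to be tracked along the deformation. The matrix-tree theorem resolves this by replacing $\Det \Delta$ with an honest polynomial in the weights, after which edge-transitivity and the triangle angle-sum identity make the vanishing essentially automatic. A pleasant by-product, which I would record as a remark, is that the cotan weights depend only on the angles and are therefore scale-invariant, so no volume normalization is needed and the stationarity holds for unrestricted variations of $l$.
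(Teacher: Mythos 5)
Your proposal is correct, and it takes a genuinely different route from the paper. The paper differentiates the pseudo-determinant directly via the regularized trace formula $\frac{d}{d\varepsilon}\log \Det \Delta_\varepsilon = \lim_{\delta\to 0}\tr\bigl(\dot{\Delta}(\Delta_0+\delta)^{-1}\bigr)$ (its Lemma \ref{logdet}, which requires tracking the constant rank of the family), then exploits completeness by computing the inverse of $\Delta_0+\delta$ explicitly — a matrix with constant diagonal and constant off-diagonal entries (Lemma \ref{matlemma1}) — and concludes via a trace lemma (Lemma \ref{matlemma}) using exactly the two identities you also derive, $\tr\dot{\Delta}=0$ and $\sum_{i\neq j}\dot{\Delta}_{ij}=0$, both consequences of the angle-sum relation $t_\alpha+t_\beta+t_\gamma=0$ and the constancy of $\csc^2$ at $\pi/3$. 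You instead invoke the weighted matrix-tree theorem, $\Det\Delta = |V|\,\tau$ with $\tau$ the spanning-tree polynomial, which replaces the pseudo-determinant by an honest polynomial in the weights; this eliminates both the $\delta$-regularization and the constant-rank bookkeeping, and reduces everything to $\sum_e \delta w_e = 0$ once $N_e$ is constant. Where the paper uses completeness through the explicit inverse, you use it through edge-transitivity of $K_n$; this means your argument extends verbatim to all edge-transitive triangulations (octahedron, icosahedron, the $9$-point torus), whereas the paper's method extends to the differently-flavored class of strongly symmetric graphs (its Lemma \ref{incomplete-lemma2} and Remark \ref{remark-constant}) — two comparable but distinct generalizations of the complete case. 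Two harmless discrepancies worth noting: your weight convention $w_e=\tfrac12(\cot\alpha_e+\cot\beta_e)$ carries an extra factor $\tfrac12$ relative to Definition \ref{cotan-def}, which only shifts $\log\Det\Delta$ by an additive constant and so leaves stationarity untouched; and your closing remark on scale-invariance of the weights correctly explains why no area normalization is needed for part (i), though the paper's Theorem \ref{sphth}(ii) treats the area-normalized operator $\Delta'_{\text{cot}}$ separately, which your argument does not cover (and the statement you were given does not require).
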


Examples that satisfy the conditions of the theorem include: a) 
a triangulation of the sphere $\mathbb{S}^2$: the tetrahedron, inducing the complete graph $K_4$;
b) the $7$-point triangulation of the $2$-torus $\mathbb{T}^2 = \mathbb{S}^1 \times \mathbb{S}^1$: the Császár polyhedron, 
inducing the complete graph $K_7$. Both triangulations are illustrated in Figure \ref{pyradjazenz}. 

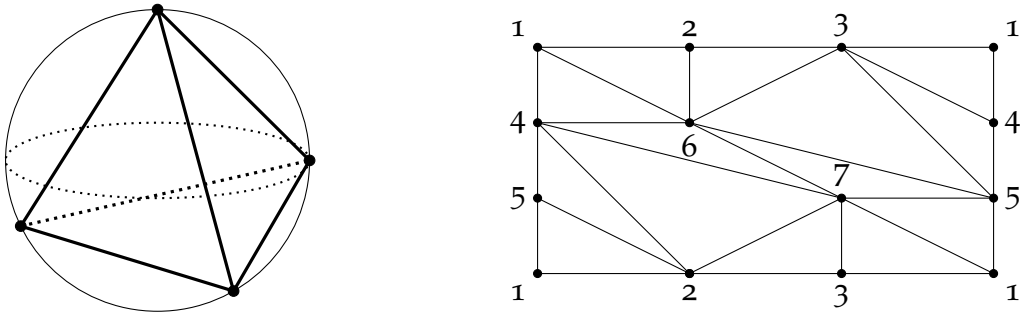
\begin{figure}[h]    
\centering
\begin{tikzpicture}[scale = 1]

\draw (0,0) circle (2);
\draw[dotted, thick] (0,0) ellipse (2 and 0.5);
\draw[very thick] (0,2) -- (-1.8,-0.871) -- (1,-1.732) -- (2,0) -- (0,2);
\draw[very thick] (0,2) -- (1,-1.732);
\draw[very thick, dotted] (-1.8,-0.871) -- (2,0);
\filldraw (0,2) circle (0.07);
\filldraw (-1.8,-0.871) circle (0.07);
\filldraw (1,-1.732) circle (0.07);
\filldraw (2,0) circle (0.07);

   \begin{scope}[shift={(5,-1.5)}]
       \draw (0,0) -- (6,0) -- (6,3) -- (0,3) -- (0,0);

\filldraw (0,0) circle (1.5pt);
\filldraw (2,0) circle (1.5pt);
\filldraw (4,0) circle (1.5pt);
\filldraw (6,0) circle (1.5pt);
\filldraw (0,1) circle (1.5pt);
\filldraw (0,2) circle (1.5pt);
\filldraw (0,3) circle (1.5pt);
\filldraw (2,0) circle (1.5pt);
\filldraw (2,2) circle (1.5pt);
\filldraw (2,3) circle (1.5pt);
\filldraw (4,1) circle (1.5pt);
\filldraw (4,3) circle (1.5pt);
\filldraw (6,1) circle (1.5pt);
\filldraw (6,2) circle (1.5pt);
\filldraw (6,3) circle (1.5pt);

\draw (0,3) -- (2,2) -- (0,2) -- (2,0) -- (0,1);
\draw (6,0) -- (4,1) -- (6,1) -- (4,3) -- (6,2);
\draw (0,2) -- (4,1) -- (2,0);
\draw (6,1) -- (2,2) -- (4,3);
\draw (2,3) -- (2,2) -- (4,1) -- (4,0);
\node at (0,0)[below left] {1};
\node at (6,0)[below right] {1};
\node at (0,3)[above left] {1};
\node at (6,3)[above right] {1};
\node at (0,2)[left] {4};
\node at (6,2)[right] {4};
\node at (0,1)[left] {5};
\node at (6,1)[right] {5};
\node at (2,0)[below] {2};
\node at (2,3)[above] {2};
\node at (4,0)[below] {3};
\node at (4,3)[above] {3};
\node at (2,2)[below] {6};
\node at (4,1)[above] {7};

 \end{scope}
 \end{tikzpicture}
\caption{Left: the simplest triangulation of $\mathbb{S}^2$: the tetrahedron, 
inducing the complete graph $K_4$. Right: the $7$-point triangulation 
of the $2$-torus: the Császár polyhedron, inducing the complete graph $K_7$.}
\label{pyradjazenz}
\end{figure}

The Császár polyhedron with uniform side lengths is illustrated in Figure \ref{csaszar-uni},
where the colouring indicates which sides are glued together. 

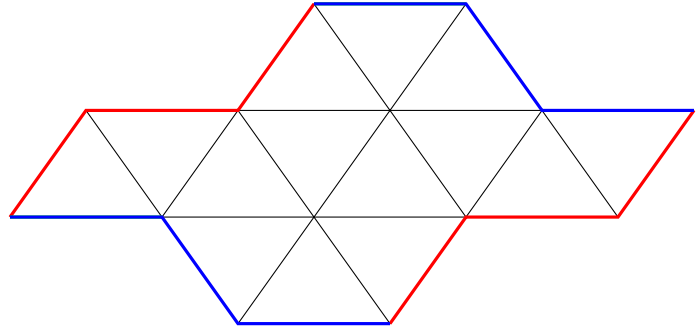
\begin{figure}[h]   
\begin{tikzpicture}
\draw (-1,2.12) -- (1,2.12);
\draw (-4,0.707) -- (4,0.707);
\draw (-5,-0.707) -- (3,-0.707);
\draw (-2,-2.12) -- (0,-2.12);

\draw (-2,0.707) -- (-1,2.12) -- (0,0.707) -- (1,2.12) -- (2,0.707);
\draw (-5,-0.707) -- (-4,0.707) -- (-3,-0.707) -- (-2,0.707) -- (-1,-0.707) -- (0,0.707) -- 
(1,-0.707) -- (2,0.707) -- (3,-0.707) -- (4,0.707);
\draw (-3,-0.707) -- (-2,-2.12) -- (-1,-0.707) -- (-0,-2.12) -- (1,-0.707);

\draw[red, very thick] (-5,-0.707) -- (-4,0.707) -- (-2,0.707) -- (-1,2.12);
\draw[red, very thick] (0,-2.12) -- (1,-0.707) -- (3,-0.707) -- (4,0.707);
\draw[green, very thick] (-5,-0.707) -- (-3,-0.707);
\draw[green, very thick] (-1,2.12) -- (1,2.12);
\draw[blue, very thick] (-5,-0.707) -- (-3,-0.707) -- (-2,-2.12) -- (0,-2.12);
\draw[blue, very thick] (-1,2.12) --(1,2.12) -- (2,0.707) -- (4,0.707);
\end{tikzpicture}
\caption{Császár polyhedron with uniform edge lengths.}
 \label{csaszar-uni}
\end{figure}

%%%%%%%%%%%%%%%%%%%%%%%%%%%%%%%
\subsection{Extremals of determinants for incomplete symmetric graphs}
%%%%%%%%%%%%%%%%%%%%%%%%%%%%%%%

Our second main result studies determinants on incomplete, but strongly symmetric graphs. 
The full statement is presented below in Theorem \ref{incomplete-th}
and reads informally as follows.

\begin{theorem}\label{main2}
Let $M = (V, E, T)$ be a triangulation of a smooth closed Riemannian surface, consisting of vertices $V$, edges $E$ and
triangle faces $T$. Consider a discrete metric $l: E \to (o,\infty)$ and impose the following conditions:
\begin{enumerate}
\item the graph is strongly symmetric in the sense of Definition \ref{strongly-symmetric-def},
or more generally satisfies the conditions laid out in Remark \ref{remark-constant}.
\item $M$ has no boundary, i.e. each edge is bounded by exactly two triangles.
\item the discrete metric $l$ induces angles $\pi/3$ in each triangle
\end{enumerate}
Then the determinant (excluding zero eigenvalues) 
of the cotan-Laplace operator $\Delta = \Delta_\text{cot}$ on $(M,l)$ is stationary 
with respect to variational changes of $l$. 
\end{theorem}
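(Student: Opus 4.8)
The plan is to differentiate $\log\Det\Delta$ along an arbitrary variation $l_t$ of the discrete metric with $l_0$ the equilateral metric, exploiting the interplay of two facts: the weight-derivatives of $\log\Det\Delta$ are effective resistances, which the symmetry renders equal across all edges, and the total cotan-weight is stationary triangle by triangle. First I would write the cotan-Laplacian as a weighted graph Laplacian $\Delta = \sum_{e\in E} w_e\, L_e$, where $w_e = \tfrac12(\cot\alpha_e+\cot\beta_e)$ is the weight of the edge $e$ (with $\alpha_e,\beta_e$ the two angles opposite $e$) and $L_e = (\mathbf 1_i-\mathbf 1_j)(\mathbf 1_i-\mathbf 1_j)^{\!\top}$ is the elementary edge Laplacian of $e=\{i,j\}$. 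Near the equilateral metric every $w_e>0$ (indeed $\cot(\pi/3)>0$) and the graph is connected, so $\ker\Delta$ is exactly the constants throughout the variation. Working on the fixed orthogonal complement of the constants, $\Det\Delta$ is the genuine determinant of an invertible operator and $\tfrac{d}{dt}\log\Det\Delta = \tr(\Delta^+\dot\Delta)$ for the Moore--Penrose pseudoinverse $\Delta^+$; this is legitimate precisely because the kernel does not move. Since $\dot\Delta = \sum_e \dot w_e\, L_e$ and $\tr(\Delta^+ L_e) = (\Delta^+)_{ii}+(\Delta^+)_{jj}-2(\Delta^+)_{ij} = R_e$ is the effective resistance across $e$, I obtain
\begin{equation*}
\frac{d}{dt}\Big|_{t=0}\log\Det\Delta = \sum_{e\in E} R_e\,\dot w_e .
\end{equation*}

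Next I would use the symmetry. I would extract from the hypothesis of strong symmetry (Definition \ref{strongly-symmetric-def}), and from the weaker condition of Remark \ref{remark-constant}, exactly the feature that makes the automorphism group of $(M,l_0)$ act transitively on edges. As effective resistance is an isomorphism invariant and $\Delta^+$ transforms equivariantly under graph automorphisms, edge-transitivity forces $R_e\equiv R$ to be independent of $e$ at the equilateral configuration. Pulling the constant out gives $\tfrac{d}{dt}\log\Det\Delta = R\sum_e\dot w_e = R\,\tfrac{d}{dt}\sum_e w_e$, reducing everything to the stationarity of the total weight $\sum_e w_e$.

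Finally I would verify this stationarity locally. Grouping the two opposite angles of each edge by the triangle in which they sit yields $\sum_e w_e = \tfrac12\sum_{T}(\cot A_T+\cot B_T+\cot C_T)$, and the identity $\cot A+\cot B+\cot C = (a^2+b^2+c^2)/(4\,\mathrm{Area})$ shows each summand is scale invariant and depends only on the shape of $T$. A direct computation via Heron's formula shows its gradient in the edge lengths vanishes at the equilateral triangle, so each summand---and hence $\sum_e w_e$---is stationary under \emph{every} variation of $l$, with no symmetry needed. Combining the three steps gives $\tfrac{d}{dt}\big|_{t=0}\Det\Delta = \Det\Delta\cdot R\cdot 0 = 0$. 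I expect the main obstacle to be the second step: turning the combinatorial hypothesis of strong symmetry into the precise analytic assertion that all effective resistances coincide, and isolating in Remark \ref{remark-constant} the minimal symmetry that still guarantees this. The first and third steps are, by contrast, formal once the pseudoinverse formula is justified.
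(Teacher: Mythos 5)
Your overall architecture is sound and, modulo packaging, coincides with the paper's: your pseudoinverse formula $\frac{d}{dt}\log\Det\Delta=\tr(\Delta^+\dot\Delta)$ is the paper's Lemma \ref{logdet} (the resolvent limit there and your restriction to the orthogonal complement of the constants are the same device, legitimate because the kernel stays one-dimensional while all weights remain positive); your reduction to $\sum_e R_e\,\dot w_e$ with $R_e$ constant across edges is exactly the paper's application of Lemma \ref{matlemma} to an inverse with constant diagonal entries and constant entries on edge positions (note $R_e=2(x_0-x_1)$ in that notation, and entries at distance $\geq 2$ are irrelevant since $\dot\Delta_{ij}=0$ there); and your per-triangle stationarity $\sum_e\dot w_e=0$ is the computation \eqref{tsum0}--\eqref{trzero}, where the one-line argument is that the angle derivatives in each triangle sum to zero and $\cot'(\pi/3)$ is a common factor --- your scale-invariance route via $\cot A+\cot B+\cot C=(a^2+b^2+c^2)/(4\,\mathrm{Area})$ is a correct alternative. (The factor $\tfrac12$ in your weights is a harmless normalization difference from Definition \ref{cotan-def}.)

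The step you yourself flagged is, however, a genuine gap as proposed. "Strongly symmetric" in Definition \ref{strongly-symmetric-def} is what graph theorists call \emph{distance-regularity}: a purely combinatorial counting condition that carries no automorphisms with it. There exist strongly regular graphs (distance-regular of diameter two) with trivial automorphism group, so edge-transitivity is simply unavailable in the stated generality, and equivariance of $\Delta^+$ under automorphisms proves nothing. The constancy of $R_e$ is true but must be obtained algebraically, which is what the paper does: at the equilateral metric $\Delta_0$ is a multiple of $d\,I-A$ with $A$ the adjacency matrix, so the entries of the resolvent $(\Delta_0+\delta)^{-1}$ are, via the Neumann series, weighted sums of the path counts $p_m(i,j)=(A^m)_{ij}$, and Proposition \ref{path-characterization} shows these depend only on the distance $L(i,j)$; this is Lemma \ref{incomplete-lemma2}, giving $(\Delta^+)_{ii}\equiv x_0$ and $(\Delta^+)_{ij}\equiv x_1$ on edges, hence $R_e$ constant. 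This route also covers the alternative hypothesis of Remark \ref{remark-constant}, which your automorphism argument cannot touch at all: that hypothesis \emph{is} the assertion about the entries of $(\Delta_0+\delta)^{-1}$ and posits no symmetry whatsoever, and it transfers to $\Delta^+$ since $(\Delta_0+\delta)^{-1}-\delta^{-1}n^{-1}J\to\Delta^+$ with $J$ the all-ones matrix, whose entries are already constant. With step two repaired in this way, your steps one and three complete the proof exactly as in the paper.
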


Examples that satisfy the conditions of the theorem include a $6$-point triangulation of 
the sphere $\mathbb{S}^2$, the octahedron, and the $9$-point triangulation of the $2$-torus. 
Both triangulations are illustrated in Figure \ref{torusbild}.

\begin{figure}[h]    
\centering
\begin{tikzpicture}[scale = 1]

\draw (0,0) circle (2);
\draw[dotted, thick] (0,0) ellipse (2 and 0.5);
\draw[very thick] (0,2) -- (-2,0) -- (0.5,-0.5) -- (2,0) --  (0,2) -- (-0.5,0.5);
\draw[very thick] (-2,0) -- (-0.5,0.5) -- (2,0);
\draw[very thick] (0,2) -- (0.5,-0.5);

\filldraw (0,2) circle (0.07);
\filldraw (0,-2) circle (0.07);
\filldraw (-2,0) circle (0.07);
\filldraw (-0.5,0.5) circle (0.07);
\filldraw (0.5,-0.5) circle (0.07);
\filldraw (2,0) circle (0.07);

\draw[very thick] (0,-2) -- (-2,0);
\draw[very thick] (0,-2) -- (2,0);
\draw[very thick] (0,-2) -- (0.5,-0.5);
\draw[very thick] (0,-2) -- (-0.5,0.5);

   \begin{scope}[shift={(6,0)}]
\draw (-2,-1) -- (-2,1) -- (2,1) -- (2,-1) -- (-2,-1);
\filldraw (-2,-1) circle (0.07);
\filldraw (-2,1) circle (0.07);
\filldraw (2,1) circle (0.07);
\filldraw (2,-1) circle (0.07);

\draw (-2,0.33) -- (2,0.33);
\filldraw (-2,0.33) circle (0.07);
\filldraw (2,0.33) circle (0.07);

\draw (-2,-0.33) -- (2,-0.33);
\filldraw (-2,-0.33) circle (0.07);
\filldraw (2,-0.33) circle (0.07);

\draw (-0.66,-1) -- (-0.66,1);
\filldraw (-0.66,-1) circle (0.07);
\filldraw (-0.66,1) circle (0.07);

\draw (0.66,-1) -- (0.66,1);
\filldraw (0.66,-1) circle (0.07);
\filldraw (0.66,1) circle (0.07);

\draw (-2,0.33) -- (-0.66,1);
\filldraw (-2,0.33) circle (0.07);
\filldraw (-0.66,1) circle (0.07);

\draw (-0.66,0.33) -- (0.66,1);
\filldraw (-0.66,0.33) circle (0.07);

\draw (0.66,0.33) -- (2,1);
\filldraw (0.66,0.33) circle (0.07);

\draw (-2,-0.33) -- (-0.66,0.33);
\draw (-0.66,-0.33) -- (0.66,0.33);
\draw (0.66,-0.33) -- (2,0.33);
\draw (-2,-1) -- (-0.66,-0.33);
\draw (-0.66,-1) -- (0.66,-0.33);
\draw (0.66,-1) -- (2,-0.33);
\node at (-2,-1) [below] {$1$};
\node at (-0.66,-1) [below] {$2$};
\node at (0.66,-1) [below] {$3$};
\node at (2,-1) [below] {$1$};
\node at (-2,1) [above] {$1$};
\node at (-0.66,1) [above] {$2$};
\node at (0.66,1) [above] {$3$};
\node at (2,1) [above] {$1$};
\node at (-2,1) [above] {$1$};
\node at (-2,0.33) [left] {$4$};
\node at (-2,-0.33) [left] {$7$};
\node at (2,0.33) [right] {$4$};
\node at (2,-0.33) [right] {$7$};
\node at (-0.66,0.33) [above left] {$5$};
\node at (0.66,0.33) [above left] {$6$};
\node at (-0.66,-0.33) [above left] {$8$};
\filldraw (-0.66,-0.33) circle (0.07);

\node at (0.66,-0.33) [above left] {$9$};
\filldraw (0.66,-0.33) circle (0.07);
 \end{scope}
 \end{tikzpicture}
\caption{Left: a $6$-point triangulation of $\mathbb{S}^2$, the octahedron. 
Right: the $9$-point triangulation of the $2$-torus.}
\label{torusbild}
\end{figure}
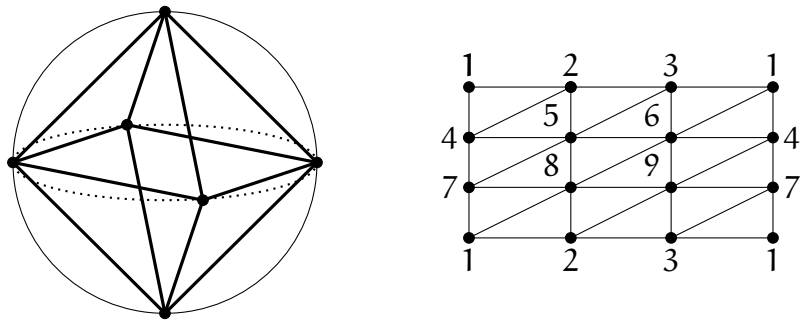

The $9$-point triangulation of the torus with uniform side lengths
is illustrated in Figure \ref{torus60bild}, where the colouring as before indicates 
which sides are glued together. Another example, where the conditions of Theorem \ref{main2}
apply, is the icosahedron. 

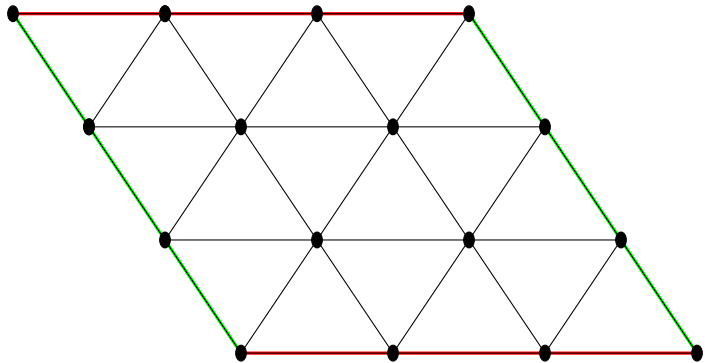
\begin{figure}[h] 
\centering
\begin{tikzpicture}[yscale=1.5]
\draw[red, very thick] (3,0) -- (9,0);
\draw[red, very thick] (6,3) -- (0,3);

\draw[green, very thick] (6,3) -- (9,0);
\draw[green, very thick] (0,3) -- (3,0);

\draw (3,0) -- (9,0) -- (6,3) -- (0,3) -- (3,0);
\filldraw (3,0) circle (0.07);
\filldraw (9,0) circle (0.07);
\filldraw (6,3) circle (0.07);
\filldraw (0,3) circle (0.07);

\draw (3,0) -- (4,1);
\filldraw (4,1) circle (0.07);

\draw (5,0) -- (6,1);
\filldraw (5,0) circle (0.07);
\filldraw (6,1) circle (0.07);

\draw (7,0) -- (8,1);
\filldraw (7,0) circle (0.07);
\filldraw (8,1) circle (0.07);

\draw (5,0) -- (2,3);
\filldraw (2,3) circle (0.07);

\draw (7,0) -- (4,3);
\filldraw (4,3) circle (0.07);

\draw (1,2) -- (7,2);
\filldraw (1,2) circle (0.07);
\filldraw (7,2) circle (0.07);

\draw (1,2) -- (2,3);
\filldraw (1,2) circle (0.07);
\filldraw (2,3) circle (0.07);

\draw (3,2) -- (4,3);
\filldraw (3,2) circle (0.07);

\draw (5,2) -- (6,3);
\filldraw (5,2) circle (0.07);

\draw (2,1) -- (8,1);
\filldraw (2,1) circle (0.07);

\draw (2,1) -- (3,2);
\draw (4,1) -- (5,2);
\draw (6,1) -- (7,2);
\end{tikzpicture}
\caption{The $9$-point triangulation of $\mathbb{T}^2$ 
with uniform edge lengths.}
\label{torus60bild}
\end{figure}

%%%%%%%%%%%%%%%%%%%%%%%%%%%%%%%
\subsection{Extremals of determinants are local minima}\label{minima-section}
%%%%%%%%%%%%%%%%%%%%%%%%%%%%%%%

Our final main observation consists of numerical illustrations which 
indicate that the stationary triangulations found
above may be (local) \textbf{minima}. This does not fit into the spirit of Theorem \ref{OPS}, where the
constant curvature metrics are actually global maxima. This shows that some crucially different phenomena 
may arise in the discrete setting versus the smooth case. 

\begin{explanation}
Although an analogy with the Osgood-Phillips-Sarnak theorem is desirable, 
the authors see no mathematical reason why this can be 
hoped for, even if the discretization is refined sufficiently. Indeed, \cite{Iz} establishes a quite 
general asymptotic behaviour of discrete determinants (though for a different discrete Laplacian)
under specific refinements of the discretization. The constant term in the asymptotics is 
the logarithm of the zeta-regularized determinant and varies with the metric according to 
the Osgood-Phillips-Sarnak theorem. However, the logarithmic term in the expansion depends non-trivially
on the angles of the various triangles in the triangulation and does not satisfy a variational 
formula in any obvious way. 
\end{explanation}

\noindent \textbf{\emph{Acknowledgements:}} The authors thank Daniel Grieser for his invaluable comments
and a careful reading of the manuscript. Furthermore we want to gratefully acknowledge Daniel 
Grieser's observation of an equivalent characterization of strongly symmetric graphs, which we wrote down in
Proposition \ref{path-characterization}.

%%%%%%%%%%%%%%%%%%%%%%%%%%%%%%%
\section{Preliminary elements of discrete geometry}
%%%%%%%%%%%%%%%%%%%%%%%%%%%%%%%

In this section we shall recall some fundamental notions from discrete geometry
that we are using throughout this note. Consider a closed Riemann surface 
and its triangulation $(V, E, T)$ as e.g. in \cite[Theorem 5.36]{Lee}, 
where $V$ denotes the set of vertices, $E$ the
set of edges and $T$ the set of triangles. The triangulation has \emph{no boundary}, 
i.e. each edge appears in the boundary of exactly two
triangles.

\begin{definition}
A \emph{discrete metric} on a triangulation $(V, E, T)$ is a collection of lengths for each edge, i.e. 
a function ${l \colon E \to (0, \infty)}$ such that for each triangle $(i,j,k) \in T$ with vertices $i,j,k \in V$ and 
edges $(i,j), (i,k), (j,k) \in E$ the lengths $l_{vw} = l(v,w)$ satisfy the triangle inequalities
\begin{align*}
l_{ij} &< l_{jk} + l_{ki}, \\
l_{jk} &< l_{ki} + l_{ij}, \\
l_{ki} &< l_{ij} + l_{jk}.
\end{align*}
\end{definition}

Lengths of the three sides in an Euclidean triangle determine the angles and the 
area uniquely. This way, the length functional, i.e. the discrete metric, defines angles and area of the individual triangles of the 
triangulation and hence paves the way for the next definition.

\begin{definition} Consider a triangulation $(V, E, T)$.
Let $T_i \subset T$ denote the set of triangles with $i \in V$ as a vertex. Denote by 
$\alpha_{i}(t)$ the angle in $t \in T_i$ at that vertex. Define the \emph{discrete Gaussian curvature} $K_i$ at 
the vertex $i$ as the angle defect
\[K_i = \pi - \sum_{t \in T_i} \alpha_{i}(t).\]
\end{definition}
In virtue of a discrete metric and its induced angles, we can furthermore define a discrete Laplace operator 
whose weights are defined in terms of these angles.

\begin{definition}\label{cotan-def}
Given a discrete metric on a triangulation without boundary, the discrete \emph{cotan-Laplace operator} $\Delta_\text{cot}$ is a linear operator acting on functions 
$f: V \to \mathbb{R}$ as follows. For any vertex $i \in V$ and emanating edge $(i,j) \in E$ consider 
the two angles $\alpha_{ij}$ and $\beta_{ij}$ opposite to the edge $(i,j)$ as in Figure \ref{cotanerklaer}. 
Let $A_i$ denote the so-called "local area element", 
given by one third of the total area of all triangles with a vertex at $i \in V$. 
We define the edge weights 
$$
w_{ij} = \cot \alpha_{ij} + \cot \beta_{ij}.
$$ 
\begin{enumerate}
\item Then the cotan Laplacian is defined by
$$
(\Delta_\text{cot} f) (i) := \sum_{(i,j) \in E} w_{ij} (f(i) - f(j)).
$$
\item The \emph{"local area normalized"} cotan Laplacian is defined by
$$
(\Delta'_\text{cot} f) (i) := A_i^{-1} \sum_{(i,j) \in E} w_{ij} (f(i) - f(j)).
$$
\end{enumerate}
\end{definition}

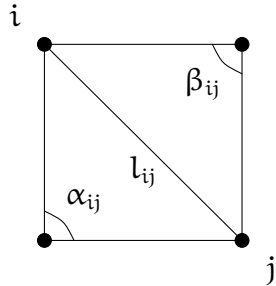
\begin{figure}[h]
\centering
\begin{tikzpicture}[scale=1.3]
\draw (0,0) -- (2,0) -- (2,2) -- (0,2) -- (0,0);
\draw (2,0) -- (0,2);
\filldraw (0,0) circle (0.07);
\filldraw (2,0) circle (0.07);
\filldraw (2,2) circle (0.07);
\filldraw (0,2) circle (0.07);

\draw (0.3,0) .. controls (0.2,0.2) and (0.2,0.2) .. (0,0.3);
\draw (1.7,2) .. controls (1.8,1.8) and (1.8,1.8) .. (2,1.7);

\node at (1,0.7) {$l_{ij}$};
\node at (0.4,0.4) {$\alpha_{ij}$};
\node at (1.6,1.6) {$\beta_{ij}$};
\node at (2.3,-0.3) {$j$};
\node at (-0.3,2.3) {$i$};

\end{tikzpicture}
\caption{$\alpha_{ij}$ and $\beta_{ij}$ in the formula for the cotan-Laplace operator. \label{cotanerklaer}}
\end{figure}

One can enumerate the vertices of a graph, so that a function $f \colon V \to \R$ is represented 
by a vector which we also call $f$. The action of $\Delta_\text{cot}$ is then given by the matrix 
(also denoted by $\Delta_\text{cot}$) with entries ( $w_{ij} := 0$ if $(i,j) \notin E$ )
\begin{align}\label{matrix-cotan}
\begin{cases}
&(\Delta_\text{cot})_{ii} = \sum\limits_{(i,j) \in E} w_{ij}, \qquad \quad 
(\Delta'_\text{cot})_{ii} = \sum\limits_{(i,j) \in E} w_{ij} A^{-1}_i, \\
&(\Delta_\text{cot})_{ij} = - w_{ij}, \ \text{if $i \neq j$}, \quad
(\Delta'_\text{cot})_{ij} = - w_{ij} A^{-1}_i, \ \text{if $i \neq j$}.
\end{cases}
\end{align}

\begin{definition}
We define the determinants $\det' \Delta_\text{cot}$ and $\det' \Delta'_\text{cot}$ 
as the product of the nonzero eigenvalues of the respective matrices in \eqref{matrix-cotan}. 
This is well-defined, since the eigenvalues of \eqref{matrix-cotan} 
do not depend on the order in which the 
vertices are enumerated.
\end{definition}

The cotan-Laplacian is widely considered to be a 'good' choice of a geometric discrete Laplace 
operator that can approximate the classic Laplace-Beltrami operator on a Riemann surface. 
Moreover, $\Delta_\text{cot}$ appears in the evolution equation for the 
curvature $\{K_i\}$ along discrete Ricci flow, as noted in \cite{confequiv}. 
However, we remark that there are various other choices with different weights $w_{ij}$.

%%%%%%%%%%%%%%%%%%%%%%%%%%%%%%%
\section{Auxiliary results on matrices, their inverses and determinants}
%%%%%%%%%%%%%%%%%%%%%%%%%%%%%%%

%%%%%%%%%%%%%%%%%%%%%%%%%%%%%%%
\subsection{An auxiliary result on the derivative of determinants}
%%%%%%%%%%%%%%%%%%%%%%%%%%%%%%%

In order to calculate the derivative of the discrete determinant in the proofs our two main 
results, Theorems \ref{main1} and \ref{main2} (corresponding to Theorems \ref{sphth} and \ref{incomplete-th} below), 
we will use the following well-known formula.

\begin{lemma} \label{logdet}
For any differentiable family $(L(t))_{t \in \R} \subset \R^{n \times n}$ of positive semi-definite symmetric matrices  
of constant rank, the derivative of its determinant is given by
\[  \frac{d}{dt}  \log \Det L(t) = \lim_{\delta \to 0} \tr(\dot{L}(t) (L(t) + \delta)^{-1}).\]
\end{lemma}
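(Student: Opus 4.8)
The plan is to reduce the statement to the classical Jacobi formula $\frac{d}{dt}\log\det L = \tr(\dot L\, L^{-1})$, which is valid for invertible matrices, and then handle the degeneracy coming from the nontrivial kernel by a regularization argument using $(L(t)+\delta)^{-1}$. Since $L(t)$ is positive semi-definite, symmetric, and of \emph{constant} rank $r$, its kernel has fixed dimension $n-r$; the key structural fact I would establish first is that the orthogonal projection $P(t)$ onto $\ker L(t)$ (equivalently onto the $\delta\to 0$ limit of the spectral projection for eigenvalues below any fixed threshold) depends differentiably on $t$, and that $\Det L(t)$ is the product of the strictly positive eigenvalues on the complementary range.

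\medskip

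First I would diagonalize: by the spectral theorem write $L(t)=O(t)D(t)O(t)^{\mathsf T}$ with $O(t)$ orthogonal and $D(t)=\operatorname{diag}(\lambda_1(t),\dots,\lambda_r(t),0,\dots,0)$, where the first $r$ eigenvalues are strictly positive and the last $n-r$ vanish identically because the rank is constant. Here one must be slightly careful, since globally smooth eigenvalue/eigenvector choices can fail when eigenvalues cross; however, for computing the derivative at a \emph{single} fixed time $t_0$ it suffices to work locally, and constant rank guarantees a uniform spectral gap separating the zero eigenvalues from the positive ones, so a local differentiable frame exists. Then $\Det L(t)=\prod_{k=1}^r\lambda_k(t)$, and differentiating the logarithm gives $\frac{d}{dt}\log\Det L(t)=\sum_{k=1}^r \dot\lambda_k(t)/\lambda_k(t)$.

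\medskip

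Next I would compute the regularized trace on the right-hand side. For $\delta>0$ the matrix $L(t)+\delta$ is positive definite, hence invertible, with eigenvalues $\lambda_k(t)+\delta$ in the same eigenbasis $O(t)$. A direct computation in this basis, using $\tr(\dot L\,(L+\delta)^{-1})=\tr(O^{\mathsf T}\dot L\,O\,(D+\delta)^{-1})$ and extracting the diagonal entries, shows that the contribution of the $k$-th diagonal slot is $(O^{\mathsf T}\dot L\,O)_{kk}/(\lambda_k(t)+\delta)$. The off-diagonal entries of $\dot L$ drop out of the trace. For the $r$ positive eigenvalues, sending $\delta\to 0$ yields $(O^{\mathsf T}\dot L\,O)_{kk}/\lambda_k(t)=\dot\lambda_k(t)/\lambda_k(t)$ by first-order perturbation theory (the diagonal of $\dot L$ in the eigenbasis records the eigenvalue derivatives). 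For the $n-r$ zero eigenvalues, the crucial point is that the corresponding diagonal entries $(O^{\mathsf T}\dot L\,O)_{kk}$ vanish: differentiating the identity $L(t)P(t)=0$ (which holds because the kernel is $t$-independent up to the differentiable rotation $O(t)$) and pairing with kernel vectors shows $\dot L$ annihilates $\ker L$ to the relevant order, so those terms are of order $\delta$ and contribute nothing in the limit. Assembling the two contributions reproduces $\sum_{k=1}^r\dot\lambda_k/\lambda_k$, matching the left-hand side.

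\medskip

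The main obstacle I anticipate is precisely this last cancellation for the zero eigenvalues: one must argue that the $\delta\to 0$ limit is finite and picks up no spurious contribution from the kernel directions, even though each term $(O^{\mathsf T}\dot L\,O)_{kk}/(0+\delta)$ would blow up were the numerator nonzero. The constant-rank hypothesis is exactly what rescues this, by forcing $\dot\lambda_k\equiv 0$ for the vanishing eigenvalues and hence forcing those numerators to vanish identically. I would therefore state clearly that constant rank is used twice — once to guarantee a differentiable spectral splitting with a uniform gap, and once to kill the kernel contribution to the regularized trace — and present the eigenbasis computation as the core of the proof, with the limit $\delta\to 0$ taken termwise since the sum is finite.
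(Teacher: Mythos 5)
Your proof is correct and reaches the paper's conclusion by a related but genuinely different mechanism. The paper assumes a differentiable family of diagonalizations $L(t)=Q^{-1}(t)D(t)Q(t)$ and uses the multiplicative trace identity $\tr(\dot{M}M^{-1})=\tr(\dot{B}B^{-1})+\tr(\dot{C}C^{-1})$ to reduce the regularized trace to $\tr(\dot{D}(D+\delta)^{-1})$; the kernel contribution then dies for the simple reason that constant rank forces the zero eigenvalue branches to be identically zero, hence $\dot{\lambda}_i=0$ and each kernel slot contributes $0/\delta$. You instead work at a single fixed time: conjugation invariance of the trace in the instantaneous eigenbasis gives $\tr(\dot{L}(L+\delta)^{-1})=\sum_k (O^{\mathsf T}\dot{L}O)_{kk}/(\lambda_k+\delta)$, and you kill the kernel slots by differentiating $L(t)P(t)\equiv 0$ to obtain $P\dot{L}P=0$, with $P(t)$ the kernel projection, differentiable thanks to the spectral gap that constant rank provides. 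This buys real robustness: the paper's implicit assumption of a differentiable diagonalizing family $Q(t)$ is not automatic for merely differentiable symmetric families when eigenvalues cross, whereas the differentiability of the Riesz projection onto the kernel, which is all you use, genuinely follows from the gap.

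Two small imprecisions, neither fatal. First, your claim that the gap yields a local differentiable eigenframe is stronger than what is true (crossings among the \emph{positive} eigenvalues can obstruct even continuous eigenvector choices), but it is also more than your computation needs, since you only use the spectral decomposition at the fixed time $t$. Second, the per-slot identification $(O^{\mathsf T}\dot{L}O)_{kk}=\dot{\lambda}_k$ via first-order perturbation theory is literally valid only at simple eigenvalues; at a degenerate positive eigenvalue $\lambda$ it holds only after summing the diagonal block, i.e. $\tr\bigl(P_\lambda \dot{L}P_\lambda\bigr)$ equals the sum of $\dot{\lambda}$ over the branches through $\lambda$. Since the weight $1/(\lambda+\delta)$ is constant on each such cluster, the trace-level conclusion is unaffected, and your argument goes through as stated once this cluster-summed version is substituted.
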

\begin{proof}
For any differentiable one-parameter families of matrices $B(t), C(t) \in \R^{n \times n}$ and $M(t) := B(t) C(t)$ it holds that
\begin{align}\label{eq1}
\tr(\dot{M} M^{-1}) = \tr((\dot{B} C + B \dot{C}) C^{-1} B^{-1}) = \tr(\dot{B} B^{-1}) + \tr(\dot{C} C^{-1}),
\end{align}
where e.g. $\dot{M}$ denotes differentiation of $M(t)$ in $t$. Thus, if $B = B(t)$ 
is a family of invertible matrices and $C = B(t)^{-1}$, 
the following expression vanishes (here $M(t) := B(t)B^{-1}(t)\equiv I$ with $I$ being the identity matrix):
\begin{align}\label{eq2}
\tr(\dot{B} B^{-1}) + \tr \left( B \left( \frac{d}{dt} B^{-1} \right)\right) = \tr(\dot{I}I^{-1}) = 0.
\end{align}
For diagonalizable matrices $M(t)$, written as $M(t)= Q^{-1}(t) D(t) Q(t)$ for a diagonal matrix family $D(t)$, 
we obtain using \eqref{eq1} twice and \eqref{eq2} once
\begin{align}\label{eq3}
\tr(\dot{M} (M^{-1})) = \tr(\dot{D} D^{-1}) +  \tr \left( \left( \frac{d}{dt} Q^{-1} \right) Q\right) + \tr(\dot{Q} Q^{-1}) = \tr(\dot{D} D^{-1}).
\end{align}
We now consider $M(t) := L(t)$ with its diagonalization 
$$
D(t) := \textup{diag} (\lambda_1(t), \dots, \lambda_n(t)),
$$
with the diagonal of eigenvalues of $A(t)$. We shall enumerate $\lambda_m(t), \dots, \lambda_n(t)$ as the positive eigenvalues
with $m-1 = \dim \ker L(t)$ independent of $t$ by assumption. Note that from $\lambda_1(t), \dots, \lambda_{m-1}(t) \equiv 0$ it follows that $\dot \lambda_1, \dots, \dot \lambda_{m-1} = 0$. We compute using \eqref{eq3}
\begin{align*}
\lim \limits_{\delta \to 0} \tr(\dot{L} (L + \delta)^{-1}) &= \lim \limits_{\delta \to 0} \tr(\dot{D} (D + \delta)^{-1}) = \lim \limits_{\delta \to 0}  \sum_{i=0}^n \frac{\dot \lambda_i}{\lambda_i + \delta} \\ &= \lim \limits_{\delta \to 0} \sum_{i=m}^n \frac{\dot \lambda_i}{\lambda_i + \delta} + \frac{0}{\delta} = \sum_{i=m}^n \frac{\dot \lambda_i}{\lambda_i} \\
&= \sum_{i=m}^n  \frac{d}{dt}  \log(\lambda_i) =  \frac{d}{dt}  \log\left(\prod_{i=m}^n \lambda_i\right) \\
&=  \frac{d}{dt} \log \Det L.
\end{align*}
\end{proof}

%%%%%%%%%%%%%%%%%%%%%%%%%%%%%%%
\subsection{An auxiliary result on traces of matrix products}
%%%%%%%%%%%%%%%%%%%%%%%%%%%%%%%

In view of Lemma \ref{logdet}, we need auxiliary results that will allow us to 
evaluate the trace of products of the form $\dot{L}(t) (L(t) + \delta)^{-1}$.
The first such auxiliary result deals with the trace of products of certain matrices.

\begin{lemma} \label{matlemma}
Let $A, B \in \R^{n \times n}$ be symmetric matrices, such that
\begin{enumerate}
\item $A= (a_{ij})$ is trace-free with $\sum_{i \neq j} a_{ij} = 0$,
\item $B= (b_{ij})$ satisfies $b_{ii} = x$, and $b_{ij} = y$, $i\neq j$, whenever $a_{ij} \neq 0$.
\end{enumerate} 
Then $AB$ is also trace-free.
\end{lemma}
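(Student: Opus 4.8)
The plan is to compute $\tr(AB)$ directly from the entrywise definition of the matrix product and then split the resulting double sum into its diagonal and off-diagonal contributions, controlling each piece with one of the two hypotheses. Since both matrices are symmetric, I will freely use $a_{ik} = a_{ki}$ and $b_{ki} = b_{ik}$.

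First I would write
$$\tr(AB) = \sum_{i=1}^n (AB)_{ii} = \sum_{i=1}^n \sum_{k=1}^n a_{ik} b_{ki} = \sum_{i=1}^n a_{ii} b_{ii} + \sum_{i \neq k} a_{ik} b_{ki}.$$
For the diagonal sum, hypothesis (2) gives $b_{ii} = x$ for every $i$, so
$$\sum_{i=1}^n a_{ii} b_{ii} = x \sum_{i=1}^n a_{ii} = x \cdot \tr(A) = 0,$$
using that $A$ is trace-free.

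The off-diagonal sum is where the two conditions interact, and this is the point to watch. The key observation is that hypothesis (2) only prescribes the value of $b_{ij}$ (for $i \neq j$) on the \emph{support} of $A$, i.e. where $a_{ij} \neq 0$ --- but that is exactly where it is needed. Indeed, any term with $a_{ik} = 0$ contributes nothing to the product $a_{ik} b_{ki}$, regardless of the value of $b_{ki}$; and for every term with $a_{ik} \neq 0$, hypothesis (2) together with symmetry yields $b_{ki} = b_{ik} = y$. Hence every surviving off-diagonal term equals $y \cdot a_{ik}$, and I may extend the sum back over all off-diagonal pairs at no cost:
$$\sum_{i \neq k} a_{ik} b_{ki} = \sum_{\substack{i \neq k \\ a_{ik} \neq 0}} a_{ik}\, y = y \sum_{i \neq k} a_{ik} = 0,$$
where the last equality is precisely hypothesis (1). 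Combining the two vanishing contributions gives $\tr(AB) = 0$, as claimed.

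I do not anticipate a genuine obstacle here: the argument is a short direct computation. The only subtlety worth flagging explicitly in the write-up is the interplay described above --- that the condition on $B$ is imposed only on the nonzero pattern of $A$, which suffices because the zero entries of $A$ annihilate the corresponding terms of the trace. This is what allows the off-diagonal sum to collapse to $y \sum_{i \neq j} a_{ij}$ and then vanish by assumption (1).
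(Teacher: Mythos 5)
Your proof is correct and follows essentially the same route as the paper's: expand $\tr(AB)$ entrywise, split into diagonal and off-diagonal sums, and kill each piece using $b_{ii}=x$ with $\tr(A)=0$ and $b_{ij}=y$ on the support of $A$ with $\sum_{i\neq j} a_{ij}=0$. Your explicit remark that the zero entries of $A$ annihilate the terms where $B$ is unconstrained is a point the paper's one-line computation leaves implicit, but the argument is the same.
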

\begin{proof}
The claim follows by a simple calculation
\begin{align*}
\tr(AB) &= \sum_i \sum_j a_{ij} b_{ji} = \sum_i \left(a_{ii} b_{ii} + \sum_{j, \: j \neq i} a_{ij} b_{ji}\right)  \\
&= x \sum_i a_{ii} + y \sum_{i \neq j} a_{ij} = 0.
\end{align*}
\end{proof}

%%%%%%%%%%%%%%%%%%%%%%%%%%%%%%%
\subsection{Explicit structure of some inverse matrices}
%%%%%%%%%%%%%%%%%%%%%%%%%%%%%%%

The next result is needed to check if $AB:=\dot{L}(t) (L(t) + \delta)^{-1}$, with $L(t)$ being the family 
of cotan-Laplacians, has the structure required in Lemma \ref{matlemma} in case of complete graphs, as needed in
Theorem \ref{sphth}.

\begin{lemma} \label{matlemma1}
Let $B= (b_{ij})\in \R^{n \times n}$ be a symmetric matrix with $b_{ii} = x$, and $b_{ij} = y$, if $i\neq j$.
Assume that $(x-y)(x+(n-1)y) \neq 0$. Then $B^{-1} = (c_{ij})$ exists with
\[
c_{ii} = \frac{x + (n-2) y}{(x-y)(x+(n-1)y)}, \ c_{ij} = -\frac{y}{(x-y)(x+(n-1)y)} \ \text{for} \ i \neq j,
\]
\end{lemma}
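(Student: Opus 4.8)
The matrix $B$ has the special structure $B = (x-y)I + y J$, where $I$ is the identity matrix and $J$ is the all-ones matrix (every entry equal to $1$). The plan is to exploit this decomposition directly rather than to compute the inverse by cofactor expansion. First I would recall the two basic spectral facts about $J$: it is a rank-one symmetric matrix, its eigenvalues are $n$ (with eigenvector the all-ones vector $\mathbf{1}$) and $0$ (with multiplicity $n-1$, the eigenspace being $\mathbf{1}^\perp$), and crucially $J^2 = nJ$. Consequently $B$ is simultaneously diagonalized with $J$: on $\mathbf{1}$ it acts as multiplication by $(x-y)+ny = x+(n-1)y$, and on $\mathbf{1}^\perp$ it acts as multiplication by $x-y$. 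This immediately explains the nonvanishing hypothesis $(x-y)(x+(n-1)y)\neq 0$, which is exactly the condition that neither eigenvalue is zero, hence that $B$ is invertible.

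Given this, the cleanest route is to guess that $B^{-1}$ has the same form, namely $B^{-1} = aI + bJ$ for scalars $a,b$ to be determined, and then solve for $a$ and $b$. Multiplying out and using $J^2 = nJ$ gives
\begin{align*}
B \, B^{-1} &= \bigl((x-y)I + yJ\bigr)\bigl(aI + bJ\bigr) \\
&= a(x-y)\, I + \bigl(b(x-y) + ay + nby\bigr) J.
\end{align*}
Setting this equal to $I$ forces the two equations $a(x-y) = 1$ and $b(x-y) + y\bigl(a + nb\bigr) = 0$. From the first equation $a = \tfrac{1}{x-y}$, and substituting into the second and solving for $b$ yields $b = -\tfrac{y}{(x-y)(x+(n-1)y)}$. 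Reading off the entries of $aI + bJ$ then gives $c_{ii} = a + b$ and $c_{ij} = b$ for $i \neq j$, which after placing over the common denominator are precisely the claimed expressions; in particular $c_{ii} = \tfrac{1}{x-y} - \tfrac{y}{(x-y)(x+(n-1)y)} = \tfrac{x+(n-2)y}{(x-y)(x+(n-1)y)}$.

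There is essentially no hard part here, since the ansatz $B^{-1}=aI+bJ$ is forced once one observes that $B$ is a polynomial in $J$ and $J$ satisfies $J^2=nJ$; the inverse of such a matrix, when it exists, is again a polynomial in $J$ and hence again of the form $aI+bJ$. The only point deserving care is to confirm that the derived denominators are exactly the quantity assumed nonzero, so that the scalars $a$ and $b$ are well defined — but this is immediate from the spectral computation above. For completeness one can verify symmetry (clear, since $I$ and $J$ are symmetric) and note that because we have exhibited an explicit two-sided inverse, uniqueness of the inverse makes the computation of $B\,B^{-1}=I$ sufficient without separately checking $B^{-1}B = I$.
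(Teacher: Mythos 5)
Your proof is correct, but it takes a genuinely different route from the paper's. The paper simply writes down the candidate inverse entrywise --- the matrix $C'$ with $c'_{ii} = x+(n-2)y$ and $c'_{ij} = -y$ --- and verifies by direct summation over indices that $BC' = (x-y)(x+(n-1)y)\,I$ (incidentally, the paper's intermediate factorization $(BC')_{ii} = (x-y)(x+(n-2)y)$ is a typo for $(x-y)(x+(n-1)y)$, as your computation confirms; the sum itself and the final formula are right). You instead decompose $B = (x-y)I + yJ$ and work inside the two-dimensional commutative algebra spanned by $I$ and $J$, using $J^2 = nJ$ to \emph{solve} for the coefficients of the ansatz $B^{-1} = aI + bJ$ rather than verify a guess. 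Your approach buys two things: the spectral decomposition (eigenvalue $x+(n-1)y$ on $\mathbf{1}$, eigenvalue $x-y$ on $\mathbf{1}^{\perp}$) shows that the hypothesis $(x-y)(x+(n-1)y)\neq 0$ is precisely invertibility of $B$, which in the paper's computation looks like a fortunate cancellation; and it is structurally the same idea the paper itself later uses for the generalization in Lemma \ref{incomplete-lemma2}, whose first proof expands $(xI+yA)^{-1}$ as a Neumann series in the adjacency matrix $A$ --- in the complete-graph case $A = J-I$ and $J^2=nJ$ closes that algebra after one step, so your argument exhibits the lemma as the transparent special case. What the paper's entrywise verification buys is brevity and robustness: it is a two-line self-contained check needing no justification of the ansatz, and it is the template reused in the alternative proof of Lemma \ref{incomplete-lemma2}, where no identity as clean as $J^2=nJ$ is available. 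One pedantic point: your statement that setting the product equal to $I$ ``forces'' the two scalar equations tacitly uses linear independence of $I$ and $J$, i.e.\ $n\ge 2$; since you only need sufficiency (solve the equations, then the product is $I$ by inspection), this is harmless, and your closing remark about one-sided inverses is also covered by the fact that $I$ and $J$ commute.
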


\begin{proof}
Consider the matrix $C' = (c'_{ij}) \in \R^{n \times n}$ with entries 
\[
c'_{ii} = x + (n-2) y, \ c'_{ij} = -y \ \text{for} \ i \neq j,
\]
and compute
\begin{align*}
(BC')_{ii} &= \sum_j b_{ij} c'_{ji} = b_{ii} c'_{ii} + \sum_{j, \: j \neq i} c'_{ij} b_{ji}
\\ &= x (x + (n-2) y) - (n-1) y^2 = (x-y) (x+ (n-2) y), \\
(BC')_{ij} &= \sum_k b_{ik} c'_{kj} =  b_{ii} c'_{ij} + b_{ij} c'_{jj} + \sum_{k \neq i, j} b_{ik} c'_{kj}
\\ &= -xy + y (x + (n-2) y) - (n-2) y^2 = 0 \ \text{for} \ i \neq j.
\end{align*}
We conclude that the inverse $B^{-1}$ exists with
$$
B^{-1} = \frac{C'}{(x-y)(x+(n-1)y)}.
$$ 
\end{proof}

%%%%%%%%%%%%%%%%%%%%%%%%%%%%%%%
\section{Properties of strongly symmetric graphs}
%%%%%%%%%%%%%%%%%%%%%%%%%%%%%%%

The content of this section is needed to check if $AB:=\dot{L}(t) (L(t) + \delta)^{-1}$,
with $L(t)$ being the family of cotan-Laplacians, has the 
structure required in Lemma \ref{matlemma} in case of graphs that are not necessarily complete.
Instead of completeness, we introduce some symmetry conditions that will be assumed in 
Theorem \ref{incomplete-th} below. We shall begin 
with a definition of such symmetry conditions here.

\begin{definition}\label{strongly-symmetric-def} 
Consider a finite graph\footnote{ This graph need not arise from a 
triangulation of a smooth surface at the moment. } $(V,E)$ consisting of vertices $V$ and edges $E \neq \varnothing$.
We assume that the graph is connected, i.e. for any two vertices $i,j \in V$ there
exists a path of edges, connecting $i$ and $j$. We say that two vertices are "neighbors", if they are connected by
an edge. We introduce some notation.
\begin{enumerate}
\item $L(i,j)$ is the number of edges in a shortest path from $i$ to $j$ (the "distance"),
\item $\alpha_s(i,j)$ is the number of neighbors $k$ of $i$ that have distance $s$ from $j$.
\end{enumerate}
We call a graph \emph{strongly symmetric} if for any $s \in \N_0$ 
$$L(i,j) = L(v,w) \Rightarrow \alpha_s(i,j) = \alpha_s(v,w).$$ 
We then write $\alpha_s(\ell) := \alpha_s(i,j)$ for $\ell = L(i,j)$.
\end{definition}

Note that $L(i,j) = 0$ iff $i = j$ and $L(i,j) = 1$ iff $(i,j) \in E$. 
For given $i,  j \in V$ with $L(i,j) = \ell$, the longest possible distance of a vertex $k \in V$ with $(i,k) \in E$ to $j$ is $L(k,j) = \ell + 1$, 
since one can always choose the $1$-edge path from $k$ back to $i$ followed by a shortest path from $i$ to $j$. On the other hand, 
the shortest possible distance from such $k$ to $j$ is $L(k,j) = \ell-1$ because if $L(k,j) \leq \ell - 2$, this would contradict $L(i,j) = \ell$ 
by the same reasoning. Hence for any $\ell$ 
\begin{equation} \label{alpha-zero}
\alpha_s(\ell) \neq 0 \ \Rightarrow \ s \in \{\ell-1, \ell, \ell+1\}.
\end{equation}
Strong symmetry moreover implies that the number of edges, emanating from each $i \in V$, i.e. 
the degree of a vertex, is the same for each $i$
and is given for each $\ell$ by 
\begin{equation}
\alpha_1(i,i) \equiv \alpha_1(0) = \alpha_{\ell}(\ell) + \alpha_{\ell-1}(\ell) + \alpha_{\ell+1}(\ell).
\end{equation}
Finally, for a finite graph we may define $L:= \max \{ L(i,j) \mid i,j \in V\}$ and find
\begin{equation}\label{alpha-L}
\alpha_{L+1} (L) = 0, \quad
\alpha_L(L) = \alpha_1(0) - \alpha_{L-1}(L).
\end{equation}

Examples of a strongly symmetric graph are the octahedron
and the $9$-point triangulation of the $2$-torus as both shown in Figure \ref{torusbild}. 
It will be of advantage to employ an equivalent characterization of 
strongly symmetric graphs.

\begin{proposition}\label{path-characterization}
A finite connected graph $(V,E)$ is strongly symmetric if and only if
the number $p_m(i,j)$ of (obviously not necessarily shortest) paths of length $m \in \N$ between vertices $i,j \in V$
depends only on the distance $\ell = L(i,j)$ between $i$ and $j$. In such a case we write
$$
p_m(i,j) = p_m(\ell).
$$
\end{proposition}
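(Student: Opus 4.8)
The plan is to prove both implications by relating the path-counting numbers $p_m(i,j)$ to the local structure encoded by the $\alpha_s$ data, using the fact that path counts are entries of powers of the adjacency matrix. Let $A$ denote the adjacency matrix of $(V,E)$, so that $(A^m)_{ij} = p_m(i,j)$ is the number of walks of length $m$ from $i$ to $j$. The key observation is a recursion: a path of length $m$ from $i$ to $j$ is obtained by first stepping from $i$ to some neighbor $k$, and then taking a path of length $m-1$ from $k$ to $j$. This gives
\begin{equation}\label{path-recursion}
p_m(i,j) = \sum_{k \colon (i,k) \in E} p_{m-1}(k,j).
\end{equation}
The neighbors $k$ of $i$ are partitioned according to their distance $s = L(k,j)$ to $j$, and by \eqref{alpha-zero} the only possibilities are $s \in \{\ell-1, \ell, \ell+1\}$ where $\ell = L(i,j)$. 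The number of neighbors falling into each distance class is exactly $\alpha_{\ell-1}(\ell)$, $\alpha_\ell(\ell)$, $\alpha_{\ell+1}(\ell)$.

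For the forward direction, I would argue by induction on $m$ that $p_m(i,j)$ depends only on $\ell = L(i,j)$. The base case $m=1$ is immediate since $p_1(i,j) = 1$ iff $(i,j) \in E$ iff $\ell = 1$. For the inductive step, rewrite \eqref{path-recursion} by grouping neighbors by their distance to $j$:
\begin{equation}\label{grouped-recursion}
p_m(i,j) = \alpha_{\ell-1}(\ell)\, p_{m-1}(\ell-1) + \alpha_\ell(\ell)\, p_{m-1}(\ell) + \alpha_{\ell+1}(\ell)\, p_{m-1}(\ell+1).
\end{equation}
Under strong symmetry, all the $\alpha$-coefficients depend only on $\ell$, and by the inductive hypothesis each $p_{m-1}(\cdot)$ depends only on its distance argument, so the right-hand side depends only on $\ell$. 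Hence so does $p_m(i,j)$, completing the induction.

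For the converse, I would assume $p_m(i,j)$ depends only on $\ell$ for all $m$ and recover the $\alpha_s(\ell)$. The cleanest route is to show the whole distance profile is determined: first note that the distance $\ell = L(i,j)$ itself equals the smallest $m$ with $p_m(i,j) \neq 0$, so $L$ is already a function of the path data. To extract the individual $\alpha_s(\ell)$, I would use the recursion \eqref{grouped-recursion} together with the $m=2$ count. The quantity $p_2(i,j)$ counts the common neighbors of $i$ and $j$ (length-two walks), and more refined combinations of the $p_m$ for small $m$ pin down $\alpha_{\ell-1}(\ell)$, $\alpha_\ell(\ell)$, $\alpha_{\ell+1}(\ell)$ separately. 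Since by hypothesis these path counts depend only on $\ell$, so must the $\alpha_s$, which is precisely strong symmetry.

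The main obstacle is the converse direction: making rigorous the claim that the $\alpha_s(\ell)$ are genuinely recoverable from the collection $\{p_m(\ell)\}_m$, rather than merely constrained by it. The recursion \eqref{grouped-recursion} expresses the $p_m$ in terms of the $\alpha_s$, but inverting this to solve for the three unknowns $\alpha_{\ell-1}(\ell), \alpha_\ell(\ell), \alpha_{\ell+1}(\ell)$ at each $\ell$ requires care: one must verify that the linear system assembled from enough values of $m$ has a unique solution, and handle boundary cases at $\ell = 0$ and $\ell = L$ (where some $\alpha_s$ vanish by \eqref{alpha-L}) separately. I expect the degree $\alpha_1(0)$, obtainable as $p_2(i,i)$, to serve as a useful normalization tying the three coefficients together, so that the constancy of just two of them forces the third.
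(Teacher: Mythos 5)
Your forward direction is exactly the paper's argument: induction on $m$, grouping the neighbors of $i$ by their distance to $j$ and using strong symmetry to make the coefficients distance-dependent. The genuine gap is the converse, and you have flagged it yourself without closing it: you never invert the recursion, and the entry point you propose --- $p_2(i,j)$ and other ``small $m$'' counts --- would fail for pairs at distance $\ell > 2$, because $p_m(i,j) = 0$ for every $m < \ell$, so those counts carry no information whatsoever about $\alpha_{\ell-1}(i,j)$, $\alpha_\ell(i,j)$, $\alpha_{\ell+1}(i,j)$; the informative window begins at $m = \ell$, not at $m = 2$. A further point of care: in the converse the recursion must be written with pair-dependent coefficients $\alpha_s(i,j)$, since their independence of the pair is precisely what is to be proved; your displayed grouped recursion already writes $\alpha_s(\ell)$ and thus presupposes the conclusion (it is fine in the forward direction, but not here).

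The missing idea, which is how the paper's proof closes the argument, is that no general linear system ``assembled from enough values of $m$'' is needed: evaluating the recursion at the three smallest informative lengths $m = \ell, \ell+1, \ell+2$ automatically yields a \emph{triangular} system, because $p_{m-1}(s) = 0$ whenever $m-1 < s$ (no walk is shorter than the distance):
\begin{align*}
p_\ell(\ell) &= \alpha_{\ell-1}(i,j)\, p_{\ell-1}(\ell-1), \\
p_{\ell+1}(\ell) &= \alpha_{\ell-1}(i,j)\, p_{\ell}(\ell-1) + \alpha_{\ell}(i,j)\, p_{\ell}(\ell), \\
p_{\ell+2}(\ell) &= \alpha_{\ell-1}(i,j)\, p_{\ell+1}(\ell-1) + \alpha_{\ell}(i,j)\, p_{\ell+1}(\ell)
 + \alpha_{\ell+1}(i,j)\, p_{\ell +1}(\ell+1).
\end{align*}
The pivots $p_{\ell-1}(\ell-1)$, $p_\ell(\ell)$, $p_{\ell+1}(\ell+1)$ count shortest paths between vertices at the indicated distances, hence are at least $1$, so one solves successively for $\alpha_{\ell-1}(i,j)$, then $\alpha_\ell(i,j)$, then $\alpha_{\ell+1}(i,j)$, each expressed through quantities depending only on $\ell$ --- which is strong symmetry. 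By \eqref{alpha-zero} these three are the only possibly nonzero $\alpha$'s, so nothing else needs recovering, and the boundary cases you worried about are harmless: at $\ell = L$ the third equation is vacuous since $\alpha_{L+1}(L) = 0$ by \eqref{alpha-L}, and at $\ell = 0$ your observation that the degree equals $p_2(i,i)$ settles the only nontrivial coefficient $\alpha_1(0)$. With this pivot argument supplied, your outline becomes a complete proof along the same lines as the paper's.
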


\begin{proof}
Assume that $(V,E)$ is strongly symmetric. Obviously,
$p_1(i,j) = 1$ if $i$ and $j$ are neighbors, i.e. connected by an edge, and zero otherwise. 
Hence $p_1(i,j)$ in fact depends only on the distance $\ell = L(i,j)$.
Assume by induction that $p_{m-1}(i,j)$ depends only on the distance $\ell = L(i,j)$
for some $m \in \N$. Then for any $i,j \in V$ with $\ell = L(i,j)$ we find
\begin{align*}
p_m(i,j) &= \sum_{s}  \alpha_{s}(\ell) p_{m-1}(s)
\\ &=  \alpha_{\ell-1}(\ell) p_{m-1}(\ell-1) + \alpha_{\ell}(\ell) p_{m-1}(\ell) + \alpha_{\ell+1}(\ell)p_{m-1}(\ell+1).
\end{align*}
By this formula, $p_m(i,j)$ also depends only on the distance $\ell = L(i,j)$.
Hence this is true for all $m$. Conversely, assume that $p_m(i,j)$ depends only on the distance $\ell = L(i,j)$.
We shall show that the graph is strongly symmetric. For this we note 
\begin{align*}
&p_\ell(\ell) = \alpha_{\ell-1}(i,j) p_{\ell-1}(\ell-1), \\
&p_{\ell+1}(\ell) = \alpha_{\ell-1}(i,j) p_{\ell}(\ell-1) + \alpha_{\ell}(i,j) p_{\ell}(\ell), \\
&p_{\ell+2}(\ell) = \alpha_{\ell-1}(i,j) p_{\ell+1}(\ell-1) + \alpha_{\ell}(i,j) p_{\ell+1}(\ell)
 + \alpha_{\ell+1}(i,j)p_{\ell +1}(\ell+1).
\end{align*}
From here we conclude that $\alpha_{\ell \pm 1}(i,j)$ and $\alpha_{\ell}(i,j)$ indeed depend
only on $\ell = L(i,j)$ and hence the graph is strongly symmetric.
\end{proof}

Proposition \ref{path-characterization} does not provide a feasible way to check if a 
given graph is actually strongly symmetric, since in general we cannot study the number $p_m(i,j)$ of paths 
for \emph{arbitrary} length $m \in \N$. However, Proposition \ref{path-characterization} allows
for a simple proof of the following lemma, which is an analogue of Lemma \ref{matlemma1} and is key
for the proof of Theorem \ref{incomplete-th}.

\begin{lemma} \label{incomplete-lemma2}
Let $G = (V, E)$ be a strongly symmetric graph. 
Enumerate the vertices $V = \{1, \dots, n\}$ and consider for any $x,y \in \C$ 
a matrix $B= (b_{ij}) \in \C^{n \times n}$ with
\begin{align*}
\begin{cases}
b_{ii} = x, \: &i \in V, \\
b_{ij} = y, \: &(i,j) \in E, \\
b_{ij} = 0, \: &(i,j) \notin E, \, i \neq j.
\end{cases}
\end{align*}
Then, for any given $y$, $B$ is invertible for almost all $x$ 
and the inverse $B^{-1} = (c_{ij})$ satisfies: 
$c_{ij}$ depends only on $L(i,j)$ and $x,y$. In particular, there exist real-valued functions 
$x_0(x,y)$ and $x_1(x,y)$ such that 
$$ 
\forall i \in V: c_{ii} =x_0(x,y), \quad \forall (i,j) \in E: c_{ij} = x_1(x,y). 
$$ 
\end{lemma}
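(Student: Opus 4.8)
The plan is to recognize $B$ as a polynomial in the adjacency matrix of $G$ and then to exploit the fact that, by Proposition \ref{path-characterization}, every power of the adjacency matrix already has the distance-dependence we are after. First I would write $A = (a_{ij})$ for the adjacency matrix of $G$, with $a_{ij} = 1$ if $(i,j) \in E$ and $a_{ij} = 0$ otherwise, so that by construction $B = xI + yA$, where $I$ is the $n \times n$ identity. The key observation is that entries of powers of $A$ count walks: $(A^m)_{ij}$ equals the number of paths of length $m$ from $i$ to $j$, which in the notation of Proposition \ref{path-characterization} is exactly $p_m(i,j)$. Since $G$ is strongly symmetric, that proposition gives $p_m(i,j) = p_m(\ell)$ depending only on $\ell = L(i,j)$, so every entry $(A^m)_{ij}$ depends only on $L(i,j)$ and on $m$.

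Next I would settle invertibility. As $A$ is symmetric, the eigenvalues of $B = xI + yA$ are $x + y\mu$, with $\mu$ ranging over the finitely many eigenvalues of $A$. For fixed $y$ these vanish only for the finitely many values $x \in \{-y\mu : \mu \in \mathrm{spec}(A)\}$, so $B$ is invertible for all but finitely many $x$, in particular for almost all $x$.

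The heart of the argument is then to express $B^{-1}$, wherever it exists, as a polynomial in $A$. This follows from the Cayley--Hamilton theorem: $B$ satisfies its characteristic equation $\chi_B(B) = 0$, a monic relation of degree $n$ whose constant term is $(-1)^n \det B \neq 0$ precisely when $B$ is invertible; rearranging this relation exhibits $B^{-1}$ as $\sum_{m=0}^{n-1} \gamma_m(x,y)\, B^m$ for scalars $\gamma_m$. Substituting $B = xI + yA$ and expanding $B^m = \sum_k \binom{m}{k} x^{m-k} y^k A^k$ collects everything into $B^{-1} = \sum_{m=0}^{n-1} \widetilde{\gamma}_m(x,y)\, A^m$ for new scalars $\widetilde{\gamma}_m$ that are rational functions of $x$ and $y$ (with denominator a power of $\det B$, and with integer-coefficient numerators inherited from $A$).

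Combining the two ingredients yields the claim:
\[
c_{ij} = (B^{-1})_{ij} = \sum_{m=0}^{n-1} \widetilde{\gamma}_m(x,y)\, (A^m)_{ij} = \sum_{m=0}^{n-1} \widetilde{\gamma}_m(x,y)\, p_m\!\bigl(L(i,j)\bigr),
\]
so that $c_{ij}$ depends only on $L(i,j)$ and on $x,y$. Setting $\ell = 0$ gives the common diagonal value $c_{ii} = x_0(x,y)$, and setting $\ell = 1$ gives the common edge value $c_{ij} = x_1(x,y)$ for all $(i,j) \in E$; these are rational functions of $x,y$, real-valued whenever $x,y$ are real since $A$ has integer entries. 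The one point requiring care---and the main obstacle to a clean write-up---is the passage through Cayley--Hamilton: one must verify that $B^{-1}$ is genuinely a \emph{polynomial} in $B$ (equivalently in $A$), not merely a matrix commuting with it, and that the constant term of the relevant polynomial is nonzero exactly on the cofinite set of $x$ identified above. Everything else is bookkeeping.
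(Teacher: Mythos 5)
Your proof is correct, and it shares the paper's skeleton---writing $B = xI + yA$ for the adjacency matrix $A$ and invoking Proposition \ref{path-characterization} through the identity $(A^m)_{ij} = p_m(i,j)$---but it finishes by a different device. The paper expands $(xI+yA)^{-1}$ as a Neumann series in powers of $y/x$, reads off the distance-dependence of the entries there, and then propagates the conclusion to almost all $x$ by the identity theorem, the entries being meromorphic in $x$ by the cofactor formula. You instead use Cayley--Hamilton to exhibit $B^{-1}$ as a polynomial of degree at most $n-1$ in $B$, hence in $A$, with coefficients rational in $(x,y)$, valid directly at every $x$ with $\det B \neq 0$. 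The reservation you voice at the end is unfounded: rearranging $\chi_B(B) = 0$ genuinely expresses $B^{-1}$ as a polynomial in $B$ whenever the constant term $(-1)^n \det B$ is nonzero, and by your spectral computation this holds exactly off the finite set $\{-y\mu : \mu \in \operatorname{spec}(A)\}$, so no further verification is needed. Your route buys two things: it is purely algebraic, requiring neither convergence of an infinite series nor meromorphic continuation, and it quietly sidesteps a small blemish in the paper's argument---the Neumann series does not converge for all $|x| \geq |y|$ as stated, but only for $|x| > |y|\,\rho(A)$, where the spectral radius $\rho(A)$ equals the common vertex degree $\alpha_1(0)$ and exceeds $1$ in the triangulation setting (harmless for the paper, since any nonempty open region of convergence suffices for the continuation, but your version never has to worry about it). The paper additionally records a second, independent proof via the ansatz $c_{ij} = x_{L(i,j)}$ solved recursively from $BC = \Id$, which differs from both arguments. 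One cosmetic remark: the functions $x_0, x_1$ you produce are rational in $(x,y)$ with integer coefficients, hence real-valued for real $x,y$, which is what the lemma's (somewhat loosely phrased, given $x,y \in \C$) conclusion intends.
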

\begin{proof}
We can write the matrix $B=B(x,y)$ as $B(x,y)=x I +y A$, where $I$ is the identity 
$n \times n$ matrix and $A = (a_{ij})$ with $a_{ij} = 1$ if $(i,j) \in E$ and zero otherwise.
Using the cofactor formula, $B(x,y)^{-1}$ is a meromorphic function in $x$, for a fixed $y$.
For $|x| \geq |y|$ we can write using Neumann series
$$
B(x,y)^{-1} = x^{-1} \sum_{m=0}^\infty (-1)^m \left( \frac{y}{x}\right)^m A^m.
$$
Note that the individual entries of $A^m = (a^m_{ij})$ can be written as
$$
a^m_{ij} = \sum_{i_1,...,i_{m-1}} a_{ii_1} a_{i_1i_2} \cdots a_{i_{m-1}j}.
$$
Since $a_{ij} = 1$ if $(i,j) \in E$ and zero otherwise, $a^m_{ij} = p_m(i,j)$ is simply the
number of paths of length $m$ from $i$ to $j$, in the notation of Proposition \ref{path-characterization}. 
Consequently the value of the individual entries of $B(x,y)^{-1} = (c_{ij}(x,y))$ with $L(i,j) = \ell$
fixed, depends only on $(x,y)$ and $\ell$. In particular, given any vertex pairs $(i,j)$ and $(i',j')$
with $L(i,j) = L(i',j')$, we conclude that $c_{ij}(x,y) = c_{i'j'}(x,y)$ for $|x| \geq |y|$.
Hence $c_{ij}(\cdot,y) \equiv c_{i'j'}(\cdot,y)$ 
as meromorphic functions. This proves the statement. 
\end{proof}

We close the section with an alternative proof of the same statement, without
using Proposition \ref{path-characterization}. 

\begin{proof}[Alternative proof of Lemma \ref{incomplete-lemma2}]
We construct an inverse of $B$ by making the ansatz that $B^{-1} =(c_{ij})$ has entries 
$c_{ij} = x_\ell \equiv x_\ell(x,y)$, where $\ell = L(i,j)$. Considering  the diagonal terms, $BC = \Id$ yields the equation 
\begin{equation} \label{recur-1}
(BC)_{ii} = b_{ii} c_{ii} + \sum_{i \neq k} b_{ik} c_{ki} = x x_0 + \alpha_1(0) y x_1 = 1.
\end{equation}
For the non-diagonal terms, consider first $(i,j) \in E$. Then $L(i,j) = 1$ and
\begin{equation} \label{recur-2}
(BC)_{ij} = b_{ii} c_{ij} + b_{ij} c_{jj} + \sum_{k \neq i,j} b_{ik} c_{kj} = x x_1 + y x_0 + \alpha_1(1) y x_1 + \alpha_2(1) y x_2 = 0.
\end{equation}
For general $i,j \in V$ with $L(i,j) = \ell \in [2,L-1]$ we obtain using \eqref{alpha-zero}
\begin{equation} \label{recur-3}
\begin{split}
(BC)_{ij} &= b_{ii} c_{ij} + b_{ij} c_{jj} + \sum_{k \neq i,j} b_{ik} c_{kj} \\ &= x x_\ell + 
\alpha_{\ell-1}(\ell) y x_{\ell-1} + \alpha_{\ell}(\ell) y x_{\ell} + \alpha_{\ell+1}(\ell) y x_{\ell+1} = 0.
\end{split}
\end{equation}
Let now $L(i,j) = L$, i.e. $i$ and $j$ are two vertices of maximal distance between each other. In this case we obtain
using \eqref{alpha-L}
\begin{equation} \label{recur-L}
(BC)_{ij} = x x_L +  \alpha_{L-1}(L) y x_{L-1} +  \alpha_{L}(L) y x_{L} = 0.
\end{equation}
We shall solve the system \eqref{recur-1}, \eqref{recur-2}, \eqref{recur-3} and 
\eqref{recur-L} recursively. Note that the number $\alpha_1(0)$ of emanating edges 
from each vertex in a strongly symmetric graph is positive, since we assume 
$E \neq \varnothing$. Moreover $a_{\ell+1}(\ell) \neq 0$ for 
$\ell < L$. Indeed, given $i,j \in V$ with maximal distance $L(i,j) = L$,
consider the corresponding path of edges. Enumerate the vertices along that path by
$i = i_0, \cdots, i_L=j$. Then $L(i_{L-\ell},i_L)= \ell$ and $L(i_{L-\ell-1},i_L)= \ell+1$. Thus, by definition, 
$a_{\ell+1}(\ell) \neq 0$ for $\ell < L$ and $a_{L+1}(L) = 0$. We then obtain (assume without loss of generality that
$y \neq 0$, since for $y=0$ the matix $B=xI$ and the statement is trivial)
\begin{align}
\mbox{\eqref{recur-1}} \, \Rightarrow \, &x_1 = \frac{1 - x x_0}{\alpha_1(0)y }, \nonumber \\
\mbox{\eqref{recur-3}} \, \Rightarrow \, &x_{\ell + 1} = - \frac{x x_\ell + 
\alpha_{\ell-1}(\ell) y x_{\ell-1} + \alpha_{\ell}(\ell) y x_{\ell}}{\alpha_{\ell+1}(\ell) y}, \, 
2 \leq \ell \leq L-1, \nonumber \\
\mbox{\eqref{recur-3}} \, \Rightarrow \, &x_{L} = - \frac{\alpha_{L-1}(L) y x_{L-1}}{ \alpha_{L}(L) y + x}. \label{recur}
\end{align}
While the first two lines in \eqref{recur} above 
can be used to express $x_1, \cdots, x_{L}$ in terms of $x_0$,
the third line allows us to compute $x_0$ in terms of $x$ and $y$, thereby giving 
an explicit solution to all $x_0, \cdots, x_{L}$. Let us be precise. 
The recursion, i.e. the second equation in \eqref{recur}, yields (we shall consider $y \neq 0$ fixed and make the dependence
on $x$ explicit)
\[
x_{\ell} = p'_{\ell}(x) x_0 + p''_{\ell-1}(x),
\]
where $p'_\ell(x)$ and $p''_\ell(x)$ are polynomials in $x$ of degree $\ell$.
The leading coefficient of $p'_{\ell}(x)$ is non-zero for $\ell \leq L-1$ and is given explicitly by
\begin{align}\label{leading-coeff}
(-1)^\ell y^{-\ell} \left( \prod\limits_{k=0}^\ell \alpha_{k+1}(k) \right)^{-1}
\end{align}
Now, plugging this into the last equation in \eqref{recur} yields
\begin{align*}
&p'_{L}(x) x_0 + p''_{L-1}(x) = 
- \frac{\alpha_{L-1}(L) y}{ \alpha_{L}(L) y + x}
\Bigl( p'_{L-1}(x) x_0 + p''_{L-2}(x) \Bigr)  \\
&\Rightarrow x_0  = - \frac{ p''_{L-1}(x)\bigl(\alpha_{L}(L) y + x\bigr) + 
\alpha_{L-1}(L) y p''_{L-2}(x)}{p'_{L}(x) 
\bigl(\alpha_{L}(L) y + x\bigr)+ 
\alpha_{L-1}(L) y p'_{L-1}(x)} =: \frac{q_{L}(x)}{q_{L+1}(x)}.
\end{align*}
The crucial aspect to note is that the denominator in the formula for $x_0$
is a polynomial of degree $(L+1)$ with non-zero leading coefficient \eqref{leading-coeff}.
Hence the formula for $x_0$ makes sense, and the system of equations \eqref{recur}
can be solved uniquely for almost all values of $x$, up to the finitely many zeros of
the polynomial $q_{L+1}$.
\end{proof}

%%%%%%%%%%%%%%%%%%%%%%%%%%%%%%%
\section{Stationary point of determinants for complete graphs}
%%%%%%%%%%%%%%%%%%%%%%%%%%%%%%%

We can now prove our first main result.

\begin{theorem} \label{sphth}
Let $M = (V, E, T)$ be a triangulation such that 
\begin{enumerate}
\item the graph is complete, i.e. all vertices are connected by an edge.
\item $M$ has no boundary, i.e. each edge is bounded by exactly two triangles.
\item the discrete metric $l$ induces angles $\pi/3$ in each triangle.
\end{enumerate}
Consider a smooth family of discrete metrics $l(\varepsilon)$,
such that for $\varepsilon = 0$ all angles in the triangles are equal to $\pi/3$. Consider the 
corresponding family of cotan-Laplace operators $\Delta_\varepsilon = \Delta_\text{cot} (l(\varepsilon))$ 
and their area normalized versions $\Delta'_\varepsilon$ 
as in Definition \ref{cotan-def}. 
\begin{enumerate}
\item[(i)] Then $\Delta_\varepsilon$ is stationary at $\varepsilon = 0$, i.e.
$$
\left. \frac{d}{d\varepsilon} \right|_{\varepsilon = 0} \log \Det \Delta_\varepsilon = 0.
$$
\item[(ii)] Assume the total area of triangles for $l(\varepsilon)$ is fixed. 
Then 
$$
\left. \frac{d}{d\varepsilon} \right|_{\varepsilon = 0} \log \Det \Delta'_\varepsilon = 0.
$$
\end{enumerate}
\end{theorem}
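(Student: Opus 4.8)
The plan is to reduce both statements to Lemma \ref{logdet} and then to exhibit, at $\varepsilon=0$, exactly the algebraic structure demanded by Lemmas \ref{matlemma} and \ref{matlemma1}. First I would note that for angles near $\pi/3$ all weights $w_{ij}$ are positive, so each $\Delta_\varepsilon$ is the Laplacian of a connected weighted graph (the complete graph is connected) and hence positive semi-definite with the one-dimensional kernel of constant functions. The family therefore has constant rank $n-1$ near $\varepsilon=0$, Lemma \ref{logdet} applies, and writing $\dot\Delta$ for the $\varepsilon$-derivative at $0$ we get
\[
\left.\frac{d}{d\varepsilon}\right|_{0}\log\Det\Delta_\varepsilon = \lim_{\delta\to 0}\tr\bigl(\dot\Delta\,(\Delta_0+\delta)^{-1}\bigr).
\]
The entire task is to show this trace vanishes for each $\delta$. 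Conditions (1) and (3) pin down the structure at $\varepsilon=0$: since every triangle has all angles $\pi/3$, each edge carries $w_{ij}=2\cot(\pi/3)=2/\sqrt3$, so on the complete graph $\Delta_0$ has constant diagonal $x=(n-1)\,2/\sqrt3$ and constant off-diagonal $y=-2/\sqrt3$. Thus $\Delta_0+\delta$ is again of this two-value form, and Lemma \ref{matlemma1} yields $B:=(\Delta_0+\delta)^{-1}$ with constant diagonal and constant off-diagonal entries; since all its off-diagonal entries coincide, the compatibility hypothesis "$b_{ij}=y$ whenever $a_{ij}\neq0$" of Lemma \ref{matlemma} holds for \emph{any} symmetric $A$.

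It then remains to check that $A:=\dot\Delta$ is trace-free with $\sum_{i\neq j}a_{ij}=0$, which is the heart of the argument. Every $\Delta_\varepsilon$ has vanishing row sums, so $\sum_j\dot\Delta_{ij}=0$ for each $i$; consequently $\tr(\dot\Delta)=-\sum_{i\neq j}\dot\Delta_{ij}$ and it suffices to prove $\sum_{i\neq j}\dot\Delta_{ij}=0$, equivalently $\sum_{\text{edges}}\dot w_{ij}=0$. Here the $\pi/3$ condition enters decisively: at $\varepsilon=0$ every angle equals $\pi/3$, so the common factor $\csc^2(\pi/3)=4/3$ can be pulled out of $\dot w_{ij}=-\csc^2\alpha_{ij}\,\dot\alpha_{ij}-\csc^2\beta_{ij}\,\dot\beta_{ij}$, leaving $\sum_{\text{edges}}\dot w_{ij}=-\tfrac43\sum_{\text{edges}}(\dot\alpha_{ij}+\dot\beta_{ij})$. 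Summing the two opposite angles over all edges reorganizes into the sum of all interior angles over all triangles, i.e.\ the metric-independent constant $\pi|T|$, whose derivative vanishes. Hence $A=\dot\Delta$ and $B=(\Delta_0+\delta)^{-1}$ meet the hypotheses of Lemma \ref{matlemma}, so $\dot\Delta\,(\Delta_0+\delta)^{-1}$ is trace-free for every $\delta$, and the limit above gives (i).

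For part (ii) I would write $\Delta'_\varepsilon=D_A(\varepsilon)^{-1}\Delta_\varepsilon$ with $D_A=\operatorname{diag}(A_1,\dots,A_n)$ and pass to the conjugate symmetric matrix $\tilde\Delta_\varepsilon=D_A^{-1/2}\Delta_\varepsilon D_A^{-1/2}$, which is positive semi-definite, has the same nonzero eigenvalues (hence the same $\Det$) as $\Delta'_\varepsilon$, and has constant rank near $0$. Since the graph is complete and boundaryless, every vertex lies in exactly $n-1$ triangles, and these are all equilateral of one common side length (connectedness forces a single length), so $A_i(0)\equiv A_0$ and $D_A(0)=A_0 I$. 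Differentiating at $0$ splits $\dot{\tilde\Delta}$ into $A_0^{-1}\dot\Delta$ and $-\tfrac12 A_0^{-2}\bigl(\dot D_A\,\Delta_0+\Delta_0\,\dot D_A\bigr)$. The first term, after absorbing $A_0$ into a rescaled $\delta$, contributes $\lim_{\delta\to0}\tr\bigl(\dot\Delta\,(\Delta_0+A_0\delta)^{-1}\bigr)$, which vanishes exactly as in part (i). For the second, I would use that $(\Delta_0+A_0\delta)^{-1}$ commutes with $\Delta_0$ to collapse both summands to $\tr(\dot D_A\,P_\delta)$ with $P_\delta=\Delta_0(\Delta_0+A_0\delta)^{-1}$; as $\delta\to0$, $P_\delta$ tends to the orthogonal projection $\Pi=I-\tfrac1n J$ off the constants, giving $\tr(\dot D_A\,\Pi)=\tfrac{n-1}{n}\sum_i\dot A_i$. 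Finally $\sum_i A_i$ equals the total triangle area, so the fixed-area hypothesis forces $\sum_i\dot A_i=0$, the second term vanishes, and (ii) follows.

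I expect the main obstacle to be part (ii): organizing the conjugation to a genuinely symmetric positive semi-definite family while tracking the area normalization, and in particular identifying the limiting projection term precisely so that the constraint $\sum_i A_i=\text{const}$ is exactly what annihilates it. By contrast, the delicate point in part (i) is recognizing that the common factor $\csc^2(\pi/3)$ is what allows $\sum_{\text{edges}}\dot w_{ij}$ to reduce to the derivative of the constant total angle $\pi|T|$.
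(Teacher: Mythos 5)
Your proof of part (i) is correct and essentially the paper's own argument: reduce to Lemma \ref{logdet}, check that $\dot\Delta$ is trace-free with vanishing off-diagonal sum because the angle sum per triangle is the constant $\pi$ (your factored $\csc^2(\pi/3)=4/3$ is exactly the paper's expansion \eqref{w-expansion}), and combine Lemma \ref{matlemma1} applied to $\Delta_0+\delta$ with Lemma \ref{matlemma}; your observation that vanishing row sums give $\tr\dot\Delta=-\sum_{i\neq j}(\dot\Delta)_{ij}$, so one computation yields both conditions \eqref{trzero} and \eqref{offdiagonal} at once, is a slightly slicker packaging of the same facts.

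For part (ii), however, you take a genuinely different route. The paper stays with the non-symmetric matrix $\Delta'_\varepsilon$, expands $A_i(\varepsilon)=A+\varepsilon a_i+O(\varepsilon^2)$, and uses the fixed-area constraint $\sum_i a_i=0$ together with the constancy of the row sums $\sum_j w_{ij}(0)=2n/\sqrt3$ to verify directly that $\tr\dot{\Delta}'=0$ and $\sum_{i\neq j}(\dot{\Delta}')_{ij}=0$ (equations \eqref{trzero2}, \eqref{offdiagonal2}), after which the part (i) argument repeats with $\Delta'_0=\Delta_0/A$. You instead conjugate to the symmetric family $\tilde\Delta_\varepsilon=D_A^{-1/2}\Delta_\varepsilon D_A^{-1/2}$, split $\dot{\tilde\Delta}$ into $A_0^{-1}\dot\Delta$ plus an anticommutator term, kill the first summand as in part (i), and dispose of the second via the spectral limit $P_\delta=\Delta_0(\Delta_0+A_0\delta)^{-1}\to \Pi=\Id-\frac{1}{n}J$ (with $J$ the all-ones matrix), so that $\tr(\dot{D}_A\Pi)=\frac{n-1}{n}\sum_i\dot A_i$ vanishes by the fixed-area hypothesis. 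Both arguments are correct, and I verified your computation of $\dot{\tilde\Delta}$ and the projection limit. Your version buys something real: Lemma \ref{logdet} is stated for symmetric positive semi-definite families and Lemma \ref{matlemma} assumes $A$ symmetric, whereas the paper applies both to the non-symmetric $\Delta'_\varepsilon$ and $\dot{\Delta}'$ — a gap that is harmless (since $\Delta'$ is similar to your $\tilde\Delta$, and the proof of Lemma \ref{matlemma} never truly uses symmetry of $A$) but is left implicit there; your conjugation repairs it cleanly. What the paper's version buys in exchange is brevity: the fixed-area constraint is consumed already at the level of the trace and off-diagonal sums, so no limiting projection is needed and "the rest of the argument" is literally that of part (i).
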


\begin{proof} 
The discrete metric $l(\varepsilon)$ defines for each triangle $t \in T$ side lengths and hence also
angles, which we denote by $(\alpha(\varepsilon), \beta(\varepsilon), \gamma(\varepsilon))$.
Since the angles for $\varepsilon = 0$ are all given by $\pi/3$, we can expand
\begin{align}\label{alpha-exp}
	(\alpha(\varepsilon), \beta(\varepsilon), \gamma(\varepsilon)) = (\pi/3, \pi/3, \pi/3) + 
	\varepsilon (t_\alpha, t_\beta, t_\gamma) + O(\varepsilon^2), \ \textup{as} \ \varepsilon \to 0,
\end{align}
where $(t_\alpha, t_\beta, t_\gamma)$ are the infinitesimal angle changes. 
Because the sum of angles in each triangle is $\pi$, independently of $\varepsilon$, we conclude that
\begin{equation}
t_\alpha + t_\beta + t_\gamma = \left. \frac{d}{d\varepsilon} \right|_{\varepsilon = 0} 
\bigl( \alpha(\varepsilon) + \beta(\varepsilon) + \gamma(\varepsilon) \bigr)
= 0. \label{tsum0}
\end{equation}
Let us now consider the family of cotan-Laplace operators $\Delta_\varepsilon$, identified with
their corresponding matrices \eqref{matrix-cotan}. Expanding its weights up to linear order, we get
in view of \eqref{alpha-exp}
\begin{equation}\label{w-expansion}
w_{ij}(\varepsilon) = \cot \alpha_{ij}(\varepsilon) + 
\cot \beta_{ij}(\varepsilon) = \frac{2}{\sqrt{3}} - \frac{4}{3} (t_{\alpha_{ij}} + 
t_{\beta_{ij}}) \varepsilon + \oo(\varepsilon^2).
\end{equation}
Let us abbreviate 
$$
\dot{\Delta} := \left. \frac{d}{d\varepsilon} \right|_{0} \Delta_\varepsilon, \quad  
\dot{w}_{ij}:= \left. \frac{d}{d\varepsilon} \right|_{0} w_{ij}(\varepsilon). 
$$ 
\underline{\emph{Proof of statement $(i)$:}}
Observe that by the formula \eqref{matrix-cotan} and the equation \eqref{tsum0}
\begin{equation}\label{trzero}
\begin{split}
\tr \dot{\Delta}
= \sum_{i \in V} (\dot{\Delta})_{ii} = 
\sum_{i \in V} \sum\limits_{(i,j) \in E} \dot{w}_{ij} 
= 2 \sum\limits_{(i,j) \in E} \dot{w}_{ij} = 
- \frac{8}{3} \sum\limits_{t \in T} (t_\alpha + t_\beta + t_\gamma) = 0.
\end{split}
\end{equation}
Similarly, we compute for the sum of the off-diagonal terms in the matrix \eqref{matrix-cotan}
\begin{align}\label{offdiagonal}
\sum_{i \neq j} (\dot{\Delta})_{ij} = - 2 \sum\limits_{(i,j) \in E} \dot{w}_{ij} = 0.
\end{align}
One easily sees (here it is essential that the graph is complete, i.e. all vertices are connected and hence
the number of edges emanating from each vertex is the same) 
that $\Delta_0$ and $(\Delta_0 + \delta)$ for any $\delta > 0$
are of the same structure as $B$ in Lemma \ref{matlemma1}. Noting that 
$(\Delta_0 + \delta)$ is invertible for $\delta > 0$,
Lemma \ref{matlemma1} asserts that for $(\Delta_0 + \delta)^{-1}$
all entries on the diagonal are equal, and all entries
away from the diagonal are equal. Hence by Lemma \ref{matlemma} 
\begin{align*}
\tr \dot{\Delta} (\Delta_0 + \delta)^{-1} \equiv 0.
\end{align*}
Here we should point out, that as long as all weights $w_{ij}> 0$ (which is the case for $\varepsilon$ sufficiently small
due to $w_{ij}(0) = \frac{2}{\sqrt{3}}$) the kernel of 
$\Delta_\varepsilon$ equals the number of connected components of the graph, i.e. $1$ in this case. 
Thus, each $\Delta_\varepsilon$ has the same rank and we obtain by Lemma \ref{logdet}
\begin{align*}
\left. \frac{d}{d\varepsilon} \right|_{0} \log \Det \Delta_\varepsilon  
= \lim_{\delta \to 0} \tr \dot{\Delta}_0 (\Delta_0 + \delta)^{-1} = 0.
\end{align*}
\underline{\emph{Proof of statement $(ii)$:}}
We write $A_i(\varepsilon)$ for the local area elements (given by one third of the area of all
triangles, cf. Definition \ref{cotan-def}) at the vertex $i$ for the metric $l(\varepsilon)$. 
Note that $A_i(0) = A$ is constant, since for $l(\varepsilon=0)$, 
all triangles have equal sides and by assumption, the number of edges emanating from each vertex
is the same (hence so is the number of triangles bordering each vertex). Hence we may expand
\begin{align*}
A_i(\varepsilon) &= A + \varepsilon a_i + O(\varepsilon^2),  \\
\frac{1}{A_i(\varepsilon)} &= \frac{1}{A} - \frac{\varepsilon a_i}{A^2} + 
O(\varepsilon^2), \ \textup{as} \ \varepsilon \to 0.
\end{align*}
Since the total area of the triangles is constant in $\varepsilon$, we find 
\begin{align}\label{asum}
\sum_{i \in V} a_i  = 0.
\end{align}
Let us now consider the family of area normalized 
cotan-Laplace operators $\Delta'_\varepsilon$, identified with
their corresponding matrices \eqref{matrix-cotan}. 
By the formula \eqref{matrix-cotan} we obtain
(we shall write $n$ for the number of edges emanating from each vertex
and abbreviate by $\dot{\Delta}'$ the derivative of $\Delta'_\varepsilon$ 
evaluated at $\varepsilon = 0$)
\begin{equation}\label{trzero2}
\begin{split}
\tr \dot{\Delta}' = \sum_{i \in V} (\dot{\Delta}')_{ii} &= 
 \frac{1}{A} \sum_{i \in V} \sum\limits_{(i,j) \in E} \dot{w}_{ij} 
-\sum_{i \in V} \frac{a_i}{A^2} \sum\limits_{(i,j) \in E} w_{ij}(0) \\
&=  \frac{1}{A} \sum_{i \in V} \sum\limits_{(i,j) \in E} \dot{w}_{ij} 
-\frac{2n}{A^2 \sqrt{3}}  \sum_{i \in V} a_i = 0,
\end{split}
\end{equation}
where the last equation follows from \eqref{trzero} and \eqref{asum}.
Similarly, we compute for the sum of the off-diagonal terms in the matrix \eqref{matrix-cotan}
\begin{equation}\label{offdiagonal2}
\begin{split}
\sum_{i \neq j} (\dot{\Delta}')_{ij} &= - 2 \sum\limits_{(i,j) \in E} \dot{w}_{ij} 
+ \frac{1}{A^2} \sum_{i \in V} a_i \sum\limits_{(i,j) \in E} w_{ij}(0) \\
&= - 2 \sum\limits_{(i,j) \in E} \dot{w}_{ij} 
+ \frac{2n}{A^2 \sqrt{3}} \sum_{i \in V} a_i 
= 0,
\end{split}
\end{equation}
where the last equality follows from from \eqref{offdiagonal} and \eqref{asum}.
The rest of the argument is exactly as in the previous case. 
\end{proof}

%%%%%%%%%%%%%%%%%%%%%%%%%%%%%%%
\section{Stationary point of determinants for strongly symmetric graphs}
%%%%%%%%%%%%%%%%%%%%%%%%%%%%%%%

We can now generalize Theorem \ref{sphth} to the case of strongly symmetric graphs.

\begin{theorem} \label{incomplete-th}
Let $M = (V, E, T)$ be a triangulation such that 
\begin{enumerate}
\item the graph $(V, E)$ is strongly symmetric as in Definition \ref{strongly-symmetric-def}.
\item $M$ has no boundary, i.e. each edge is bounded by exactly two triangles.
\item the discrete metric $l$ induces angles $\pi/3$ in each triangle.
\end{enumerate}
Consider a smooth family of discrete metrics $l(\varepsilon)$
such that $l(0) = l$. Consider the 
corresponding family of cotan-Laplace operators $\Delta_\varepsilon = \Delta_\text{cot} (l(\varepsilon))$ 
and their area normalized versions $\Delta'_\varepsilon$ 
as in Definition \ref{cotan-def}. 
\begin{enumerate}
\item[(i)] Then $\Delta_\varepsilon$ is stationary at $\varepsilon = 0$, i.e.
$$
\left. \frac{d}{d\varepsilon} \right|_{\varepsilon = 0} \log \Det \Delta_\varepsilon = 0.
$$
\item[(ii)] Assume the total area of triangles for $l(\varepsilon)$ is fixed. 
Then 
$$
\left. \frac{d}{d\varepsilon} \right|_{\varepsilon = 0} \log \Det \Delta'_\varepsilon = 0.
$$
\end{enumerate}
\end{theorem}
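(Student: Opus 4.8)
The plan is to follow the proof of Theorem \ref{sphth} almost verbatim, replacing the single ingredient that used completeness of the graph, namely Lemma \ref{matlemma1}, by its strongly symmetric analogue Lemma \ref{incomplete-lemma2}. First I would reproduce the local setup: since $l(0)$ induces all angles equal to $\pi/3$, I expand the triangle angles as in \eqref{alpha-exp} and record the constraint $t_\alpha+t_\beta+t_\gamma=0$ from \eqref{tsum0}, coming from the fixed angle sum $\pi$. This yields the same weight expansion \eqref{w-expansion}, $w_{ij}(\varepsilon)=\tfrac{2}{\sqrt3}-\tfrac43(t_{\alpha_{ij}}+t_{\beta_{ij}})\varepsilon+\oo(\varepsilon^2)$, and hence the identities $\tr\dot\Delta=0$ and $\sum_{i\neq j}(\dot\Delta)_{ij}=0$ exactly as in \eqref{trzero} and \eqref{offdiagonal}; in the strongly symmetric case these remain valid because the degree $\alpha_1(0)$ is constant over all vertices.

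The crucial structural observation is that at $\varepsilon=0$ all weights equal $\tfrac{2}{\sqrt3}$ and the degree is the constant $\alpha_1(0)=:n$, so that $\Delta_0=\tfrac{2}{\sqrt3}(nI-A_{\mathrm{adj}})$ with $A_{\mathrm{adj}}$ the adjacency matrix. Thus $\Delta_0+\delta$ is precisely of the form $xI+yA_{\mathrm{adj}}$ treated in Lemma \ref{incomplete-lemma2}, with $x=\tfrac{2n}{\sqrt3}+\delta$ and $y=-\tfrac{2}{\sqrt3}$. For $\delta>0$ the matrix $\Delta_0+\delta$ is positive definite, hence invertible, and Lemma \ref{incomplete-lemma2} tells me that its inverse has entries depending only on the distance $L(i,j)$; in particular the diagonal is a single constant $x_0$ and the value on every edge is a single constant $x_1$. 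The point I want to stress is that, unlike in the complete-graph case, the inverse is \emph{not} constant off the diagonal, but Lemma \ref{matlemma} does not demand this: it only requires $B=(\Delta_0+\delta)^{-1}$ to be constant on the diagonal and constant wherever $A=\dot\Delta$ is nonzero. Since the off-diagonal support of $\dot\Delta$ lies entirely on edges, where $(\Delta_0+\delta)^{-1}$ equals $x_1$, the hypotheses of Lemma \ref{matlemma} are met with $x=x_0$, $y=x_1$. Therefore $\tr\bigl(\dot\Delta(\Delta_0+\delta)^{-1}\bigr)=0$ for every $\delta>0$, and since $w_{ij}(0)>0$ forces the kernel dimension (equal to the number of connected components, i.e.\ one) to be locally constant, Lemma \ref{logdet} yields statement $(i)$.

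For statement $(ii)$ I would argue as in \eqref{trzero2} and \eqref{offdiagonal2}. Constant degree makes the number of equilateral triangles around each vertex constant, so $A_i(0)=A$ is independent of $i$; I expand $A_i(\varepsilon)=A+\varepsilon a_i+\oo(\varepsilon^2)$ with $\sum_i a_i=0$ by the fixed-area assumption. Writing $D_a=\mathrm{diag}(a_1,\dots,a_n)$, the derivative splits as $\dot\Delta'=A^{-1}\dot\Delta-A^{-2}D_a\Delta_0$. Because $\Delta'_0=A^{-1}\Delta_0$ is again of the form $xI+yA_{\mathrm{adj}}$, the inverse $(\Delta'_0+\delta)^{-1}$ still has the Lemma \ref{incomplete-lemma2} structure, so the first summand contributes $A^{-1}\tr\bigl(\dot\Delta(\Delta'_0+\delta)^{-1}\bigr)=0$ exactly as in part $(i)$. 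For the second summand I use $\Delta_0=A\Delta'_0$ and $\Delta'_0(\Delta'_0+\delta)^{-1}=I-\delta(\Delta'_0+\delta)^{-1}$ to reduce $\tr\bigl(D_a\Delta_0(\Delta'_0+\delta)^{-1}\bigr)$ to $A\tr(D_a)-A\delta\,\tr\bigl(D_a(\Delta'_0+\delta)^{-1}\bigr)$, both of which vanish since $\sum_i a_i=0$ and the diagonal of $(\Delta'_0+\delta)^{-1}$ is the constant $x_0$. Passing to the limit $\delta\to0$ and applying Lemma \ref{logdet} completes the proof.

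I expect the only genuinely non-routine step to be the one highlighted in the second paragraph: realizing that Lemma \ref{matlemma} is insensitive to the behaviour of $(\Delta_0+\delta)^{-1}$ at distances larger than one, so that the weaker conclusion of Lemma \ref{incomplete-lemma2} (constancy along edges rather than full off-diagonal constancy) suffices. The secondary subtlety is that $\dot\Delta'$ in part $(ii)$ is not symmetric, which is why I avoid a direct appeal to Lemma \ref{matlemma} there and instead use the splitting together with the constancy of the diagonal of the inverse.
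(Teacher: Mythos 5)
Your proof is correct, and for part (i) it coincides with the paper's own argument: the identities \eqref{trzero} and \eqref{offdiagonal} carry over verbatim, Lemma \ref{incomplete-lemma2} gives that $(\Delta_0+\delta)^{-1}$ is constant on the diagonal and constant on edges, and since the off-diagonal support of $\dot{\Delta}$ is contained in $E$, Lemma \ref{matlemma} applies --- the paper makes precisely the point you highlight by noting $(\Delta_0)_{ij}=0$ for $(i,j)\notin E$. Where you genuinely deviate is part (ii). The paper simply records that \eqref{trzero2} and \eqref{offdiagonal2} still hold and applies Lemma \ref{matlemma} directly with $\dot{\Delta}'$ in the role of the first factor; your objection that $\dot{\Delta}'=A^{-1}\dot{\Delta}-A^{-2}D_a\Delta_0$ is not symmetric is accurate, since Lemma \ref{matlemma} as stated hypothesizes symmetry of both matrices. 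The discrepancy is harmless for the paper, because the proof of Lemma \ref{matlemma} uses only the symmetry of $B$ (to trade $b_{ji}$ for $b_{ij}$) and never that of the first factor, so the paper's route is salvageable by dropping a superfluous hypothesis. Your alternative --- killing the term $A^{-1}\tr\bigl(\dot{\Delta}(\Delta'_0+\delta)^{-1}\bigr)$ by Lemma \ref{matlemma} applied to the symmetric $\dot{\Delta}$, and reducing the term $\tr\bigl(D_a\Delta_0(\Delta'_0+\delta)^{-1}\bigr)$ via the resolvent identity $\Delta'_0(\Delta'_0+\delta)^{-1}=I-\delta(\Delta'_0+\delta)^{-1}$ to $\tr D_a=0$ and $x_0\sum_i a_i=0$ --- reaches the same conclusion while staying strictly within the lemmas as stated, at the modest cost of one extra identity. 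Both approaches are valid; yours is the more scrupulous of the two on exactly the point you flagged as non-routine.

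Two minor remarks. First, the identities \eqref{trzero} and \eqref{offdiagonal} do not in fact require constant degree: each edge carries exactly two opposite angles, so the edge sum regroups into $\sum_{t\in T}(t_\alpha+t_\beta+t_\gamma)=0$ for any triangulation; constant degree (from strong symmetry) enters only where you actually use it, namely in $\Delta_0=\tfrac{2}{\sqrt3}(nI-A_{\mathrm{adj}})$ and in $A_i(0)=A$. Second, your claim that the structural conclusion of Lemma \ref{incomplete-lemma2} holds for \emph{every} $\delta>0$ (the paper is more cautious, asserting it only up to a discrete exceptional set) is justified: the meromorphic-identity argument in the first proof of Lemma \ref{incomplete-lemma2} shows the entries of the inverse agree as meromorphic functions of $x$, hence the structure holds wherever the inverse exists, and positive definiteness of $\Delta_0+\delta$ gives existence for all $\delta>0$; alternatively one may, as the paper does, take $\delta\to0$ along admissible values only.
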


\begin{proof} 
The argument here differs from the proof in Theorem \ref{sphth}
only in the structure of $(\Delta_0+\delta)^{-1}$. For the properties 
of $\dot{\Delta}$ we still have \eqref{trzero}
and \eqref{offdiagonal}; for the properties 
of $\dot{\Delta}'$ we still have \eqref{trzero2}
and \eqref{offdiagonal2}, i.e.
$$
\tr \dot{\Delta} = \tr \dot{\Delta}' = 0, \quad 
\sum_{i \neq j} (\dot{\Delta})_{ij} = \sum_{i \neq j} (\dot{\Delta}')_{ij} = 0.
$$
By Lemma \ref{incomplete-lemma2}, 
for almost all $\delta > 0$ up to a discrete set of values,
the inverse $(\Delta_0 + \delta)^{-1}$
exists with all entries on the diagonal being equal, and all entries $(\Delta_0 + \delta)^{-1}_{ij}$
with $(i,j) \in E$ being equal. Same holds for $\Delta'_0$. Noting that 
$(\Delta_0)_{ij} = (\Delta'_0)_{ij} = 0$ for $(i,j) \notin E$, we conclude by Lemma \ref{matlemma} 
\begin{align*}
\tr \dot{\Delta} (\Delta_0 + \delta)^{-1} \equiv 0, \quad 
\tr \dot{\Delta}' (\Delta'_0 + \delta)^{-1} \equiv 0.
\end{align*}
Note as before in Theorem \ref{sphth}, that as long as all 
weights $w_{ij}> 0$ (which is the case for $\varepsilon$ sufficiently small
due to $w_{ij}(0) = \frac{2}{\sqrt{3}}$) the kernel of 
$\Delta_\varepsilon$ and of $\Delta'_\varepsilon$ 
equals the number of connected components of the graph, i.e. $1$ in this case. 
Thus, each $\Delta_\varepsilon, \Delta'_\varepsilon$ has the same rank and 
we obtain by Lemma \ref{logdet}
\begin{align*}
\left. \frac{d}{d\varepsilon} \right|_{0} \log \Det \Delta_\varepsilon  
= \lim_{\delta \to 0} \tr \dot{\Delta}_0 (\Delta_0 + \delta)^{-1} = 0, \\
\left. \frac{d}{d\varepsilon} \right|_{0} \log \Det \Delta'_\varepsilon  
= \lim_{\delta \to 0} \tr \dot{\Delta}'_0 (\Delta'_0 + \delta)^{-1} = 0.
\end{align*}
\end{proof}

\begin{remark}\label{remark-constant}
As is clear from the proof, Theorem \ref{incomplete-th} actually holds
for any triangulation $M = (V, E, T)$ such that, enumerating the vertices $V = \{1, \dots, n\}$,
the inverse matrix $(\Delta_0 + \delta)^{-1} = (c_{ij})$ 
satisfies 
\begin{align}\label{constant}
\forall i \in V: c_{ii} =x_0(\delta), \quad \forall (i,j) \in E: c_{ij} = x_1(\delta). 
\end{align}
for some functions $x_0(\delta)$ and $x_1(\delta)$, well-defined for
$\delta > 0$ at least up to some discrete number of exceptional values. 
The notion of strongly symmetric graphs in Definition \ref{strongly-symmetric-def}
is just one specific class of graphs that ensures \eqref{constant}, 
which may as well hold in a more general setting.
\end{remark}

%%%%%%%%%%%%%%%%%%%%%%%%%%%%%%%
\section{Numerical illustrations of the stationary points}
%%%%%%%%%%%%%%%%%%%%%%%%%%%%%%%

In order to understand the type of stationary points in 
Theorems \ref{sphth} and \ref{incomplete-th}, we perform some numerical calculations. 
We plot the discrete determinants under variation of only two edge lengths in the following cases:
\\[-2mm]

\begin{enumerate} 
\item the tetrahedron, $4$-point triangulation of the sphere 
$\mathbb{S}^2$ as in Figure \ref{pyradjazenz};
\item the $9$-point triangulation of the torus 
$\mathbb{T} = \mathbb{S}^2 \times \mathbb{S}^2$
as in Figure \ref{torus60bild}.
\end{enumerate} \ \\[-5mm] 

The plot is presented in Figure \ref{plot} using MATLAB to 
calculate the eigenvalues of the Laplacian in both cases numerically. 
The plots strongly suggest that the cotan-Laplacian attains a local minimum
when all edge lengths are equal.

\begin{figure}[h]
\centering
\includegraphics[scale=0.3]{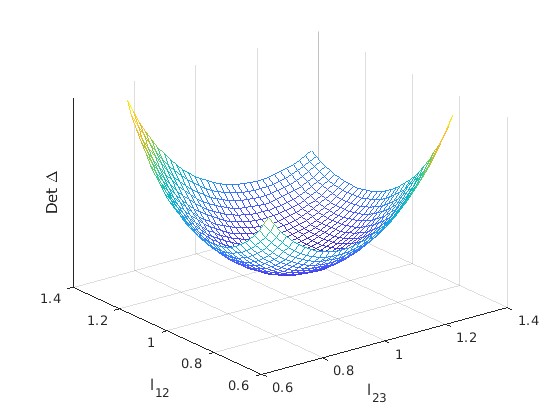}
\includegraphics[scale=0.3]{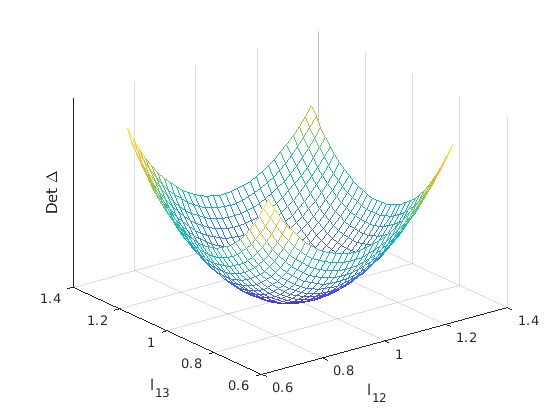}
\caption{Determinant of the cotan-Laplacian under variation 
of two side lengths. Left $-$ in the tetrahedron $4$-point triangulation of $\mathbb{S}^2$;
right $-$ in the $9$-point triangulation of $\mathbb{T} = \mathbb{S}^2 \times \mathbb{S}^2$.}
\label{plot}
\end{figure}

The variation of edge lengths near length $1$ preserves the triangle 
inequalities and thus defines a family of triangulations with discrete metrics. 
We want to point out that e.g. in the case of the tetrahedron, the variations 
can still be realized as convex polyhedrons. Indeed, it is known that for a 
triangulation endowed with a discrete metric 
$l$, the pair of conditions
\begin{enumerate}
\item The triangle inequalities hold,
\item The Cayley-Menger determinant is positive,
\end{enumerate}
are both necessary and sufficient for the existence of a convex polyhedron bearing edge lengths induced by $l$, see e.g. \cite{wirth}. 
The Cayley-Menger determinant for the tetrahedron is given by the determinant of the matrix
\[\begin{pmatrix}
0 & 1 & 1 & 1 & 1 \\
1 & 0 & l_{12} & l_{13} & l_{14} \\
1 & l_{21} & 0 & l_{23} & l_{24} \\
1 & l_{31} & l_{32} & 0 & l_{34} \\
1 & l_{41} & l_{42} & l_{43} & 0
\end{pmatrix}.\]
For $l_{ij} \equiv 1$ this determinant must be positive since we know that the uniform tetrahedron exists (in fact it equals $4$). Using the determinant's continuity in the matrix components, it immediately follows that the determinant is positive also in a neighbourhood of uniform edge lengths. Thus, arbitrary small variations of the uniform metric yield a realizable tetrahedron. 

%%%%%%%%%%%%%%%%%%%
\section{Discussion and outlook}
%%%%%%%%%%%%%%%%%%%

We close the discussion with a list of possible directions worth exploring. 

%%%%%%%%%%%%%%%%%%%
\subsection*{1) Different discrete determinants}
%%%%%%%%%%%%%%%%%%%

Our discussion centers around the cotan Laplacian $\Delta_\text{cot}$, 
motivated by the fact that it appears in the evolution equation for the 
curvature $\{K_i\}$ along discrete Ricci flow, as noted in \cite{confequiv}. 
There is a zoo of other meaningful discrete Laplacians to choose from, that 
may admit properties better aligned with the Osgood-Philips-Sarnak Theorem,
cf. \S \ref{minima-section}.

%%%%%%%%%%%%%%%%%%%
\subsection*{2) Graphs that are not strongly symmetric}
%%%%%%%%%%%%%%%%%%%

An obvious question is whether we can 
prove a discrete version of the Osgood-Philips-Sarnak Theorem
in a larger class of triangulations, in particular for those that are not necessarily
complete or strongly symmetric.

%%%%%%%%%%%%%%%%%%%
\subsection*{3) Discrete Ricci flow}
%%%%%%%%%%%%%%%%%%%

Ricci flow has been an essential ingredient in the proof of the 
Osgood-Philips-Sarnak Theorem. Can we employ the discrete Ricci flow 
to study discrete determinants as well? Is the discrete determinant monotone 
along discrete Ricci flow? Perhaps it would be necessary to 
shift the discussion to the use of circle packing metrics, 
for which some rigorous theory exists \cite{chowluo}.

\bibliographystyle{amsalpha}

\providecommand{\bysame}{\leavevmode\hbox to3em{\hrulefill}\thinspace}
\providecommand{\MR}{\relax\ifhmode\unskip\space\fi MR }
% \MRhref is called by the amsart/book/proc definition of \MR.
\providecommand{\MRhref}[2]{%
  \href{http://www.ams.org/mathscinet-getitem?mr=#1}{#2}
}
\providecommand{\href}[2]{#2}

\end{document}